\theoremstyle{plain}
\newtheorem{theorem}{Theorem}[section]
\newtheorem{proposition}[theorem]{Proposition}
\newtheorem{corollary}[theorem]{Corollary}
\newtheorem{sublemma}{}[theorem]
\newcommand{\dash}{\nobreakdash-\hspace{0pt}}
\newcommand{\ba}{\backslash}
\newcommand{\ifc}{internally $4$\dash connected}
\newcommand{\ffsc}{$(4,4,S)$\dash connected}
\newcommand{\cml}[1]{\ensuremath{\mathit{CM}_{\hspace{-1.5pt}#1}}}
\newcommand{\qml}[1]{\ensuremath{\mathit{QM}_{\hspace{-1.5pt}#1}}}
\title[
A splitter theorem for binary matroids]
{Towards a splitter theorem for internally $4$-connected
binary matroids VIII: small matroids}
\author[Chun]{Carolyn Chun}
\address{Department of Mathematics,
United States Naval Academy,
Annapolis, MD, 21402 USA}
\email{chun@usna.edu}
\author[Mayhew]{Dillon Mayhew}
\address{School of Mathematics and Statistics,
Victoria University of Wellington,
New Zealand}
\email{dillon.mayhew@vuw.ac.nz}
\author[Oxley]{James Oxley}
\address{Mathematics Department,
Louisiana State University,
United States of America}
\email{oxley@math.lsu.edu}
\date{\today}
\begin{document}

\begin{abstract}
Our splitter theorem for internally $4$\dash connected
binary matroids studies pairs of the form $(M,N)$,
where $M$ and $N$ are internally $4$\dash connected
binary matroids, $M$ has a proper $N$\dash minor,
and if $M'$ is an internally $4$\dash connected matroid
such that $M$ has a proper $M'$\dash minor and $M'$
has an $N$\dash minor, then $|E(M)|-|E(M')|>3$.
The analysis in the splitter theorem requires the
constraint that $|E(M)|\geq 16$.
In this article, we complement that analysis by
using an exhaustive computer search to find all such pairs
for which $|E(M)|\leq 15$.
\end{abstract}

\maketitle

\section{Introduction}
\label{sect1}

A matroid is \emph{\ifc} if it is $3$\dash connected and
$\min\{|X|,|Y|\}=3$ for any $3$\dash separation, $(X,Y)$.
For some time, we have been engaged in a project to develop
a splitter theorem for \ifc\ binary matroids
\cite{CMO12,CMO14a,CMO13,CMO14b,CMO14c,CO,COVII,CMOIX}.
This means that we are concerned with understanding
what we refer to here as interesting pairs.
If $N$ and $M$ are matroids, we write $N\preceq M$ to mean that
$M$ has an $N$\dash minor, and $N\prec M$ to mean that $M$
has a proper $N$\dash minor.
An \emph{interesting pair} is a pair $(M,N)$, where $M$ and $N$ are
\ifc\ binary matroids such that
\begin{itemize}
\item $|E(N)|\geq 6$;
\item $N\prec M$;
\item if $M'$ is an \ifc\ matroid
for which $N\preceq  M'\prec M$, then $|E(M)|-|E(M')|>3$.
\end{itemize}

Note that the last condition means that $|E(M)|-|E(N)|>3$.
We say that an interesting pair, $(M,N)$, is a
\emph{fascinating pair} if $M'$ is isomorphic to $N$
whenever $M'$ is an \ifc\ matroid
satisfying $N\preceq M'\prec M$.
Thus an interesting pair is fascinating if there is no intermediate
\ifc\ matroid in the minor order.

It has been known for some time (see, for example, \cite{JT02}) that there
are fascinating pairs with $|E(M)|-|E(N)|$ arbitrarily large;
indeed, this is true even if we insist that $M$ and $N$ are graphic matroids,
since we can produce a fascinating pair by setting $N$ to be the graphic
matroid of a cubic planar ladder, and letting $M$ be the graphic matroid of a
quartic planar ladder on the same number of vertices.
However, our project has shown that only a small number
of constructions are needed to build $M$ from $N$,
whenever $(M,N)$ is a fascinating pair.

The analysis in our project requires $M$ to be a certain
size, in particular, $|E(M)|\geq 16$.
To complement this analysis, our results here
contain a description of all interesting
pairs for which $|E(M)|\leq 15$.
Our first theorem will describe the fascinating pairs.
Up to duality, there are exactly $31$.
Before that, we introduce some important matroids and graphs.

For $n\geq 3$, we denote the \emph{cubic M\"{o}bius ladder} on $2n$
vertices by \cml{2n}.
This graph is obtained from an even cycle on $2n$ vertices
by joining each vertex to the antipodal vertex (the vertex of distance $n$).
Similarly, for $n\geq 2$, the \emph{quartic M\"{o}bius ladder} on $2n+1$ vertices
is denoted by \qml{2n+1}, and is obtained from an odd cycle with $2n+1$
vertices by joining each vertex to the two vertices of distance $n$.
Note that \qml{5} is isomorphic to $K_{5}$, and
\cml{6} is isomorphic to $K_{3,3}$.

The M\"obius matroids have been discovered in several contexts
\cite{kinglem,hardy}.
For each positive integer $n\ge 3$, let $\mathcal{W}_{n}$ be the
wheel with $n+1$ vertices, and let $B$ be the set of spoke edges.
Thus $B$ is a basis of the rank\dash $n$ binary matroid
$M({\mathcal W}_n)$.
Let $M_{n}$ be the binary matroid 
obtained from $M({\mathcal W}_n)$ by adding a single element,
$\gamma$, so that the fundamental circuit $C(\gamma, B)$ is
$B \cup \gamma$.
Kingan and Lemos~\cite{kinglem} denote $M_n$ by $F_{2n+1}$. 
Observe that $M_3$ is the Fano matroid, and $M_4 \cong M^*(K_{3,3})$.
When $n$ is odd, $M_n^*$ is the
\emph{rank-$(n+1)$ triadic M\"{o}bius matroid},
denoted by $\Upsilon_{n+1}$.
Hence $\Upsilon_4\cong F_{7}^{*}$.
Moreover, $\Upsilon_{6}$ is isomorphic to any
single-element deletion of $T_{12}$,
the rank\dash $6$ binary matroid
introduced by Kingan \cite{Kin97}.
We also observe that $\Upsilon_{n+1}\ba \gamma\cong M^{*}(\qml{n})$.

For $n \ge 3$, we construct the graph $G_{n+2}^{+}$ 
by starting with an $n$\dash vertex cycle, $C$, containing
adjacent vertices $x$ and $y$, and then adding two additional vertices,
$u$ and $w$, and making both of them adjacent to every vertex in $C$.
We join $u$ and $w$ with an edge $\gamma$.
Note that the planar dual of $G_{n+2}^{+}\ba \gamma$ is \cml{2n}.
Let $\Delta_{n+1}$ be the binary matroid that is obtained from 
$M(G_{n+2}^+)$ by deleting the element $xy$ and adding
a new element so that it forms a circuit with the elements $wx$ and $uy$.
This new element also forms a circuit with $ux$ and $wy$.
We also define $\Delta_{3}$ to be $F_{7}$.
Then $\Delta_{r}$ is the \emph{rank-$r$ triangular M\"{o}bius matroid}.
Observe that $\Delta_{n+1} \ba \gamma \cong M^{*}(\cml{2n})$.
Kingan and Lemos~\cite{kinglem} use $B_{3n+1}$ to denote $G_{n+2}^{+}$,
and $S_{3n+1}$ to denote $\Delta_{n+1}$.

Now we give our description of fascinating pairs.
Any graphs or matroids which we have not yet defined
will be introduced in \Cref{sect2}.
For now, we note that $Q_{3}$ is the cube graph;
$O$ is the octahedron graph;
$H_{1}$, $H_{2}$, and $H_{3}$ are graphs with $13$
edges, and, respectively, $6$, $7$, and $8$ vertices;
$Q_{3}^{\times}$ and $Y_{9}$ have $14$ edges and,
respectively, $8$ and $9$ vertices;
$A_{1}$, $A_{2}$, $A_{3}$, $A_{4}$, and $A_{5}$
are non-graphic matroids with rank $8$ and $14$ elements,
whereas $A_{6}$ has rank $7$ and $14$ elements;
the matroids $P$ and $Q$ have rank $4$ and $11$ elements;
each matroid of the form $B_{i}$ or $C_{j}$ has rank $8$ and $15$ elements;
both $R$ and $S$ have rank $5$ and $11$ elements,
while $D_{1}$ and $E_{1}$ have rank $9$ and $15$ elements.

\begin{theorem}
\label{maintheorem}
Assume that $(M_{0},N_{0})$ is a fascinating pair and $|E(M_{0})|\leq 15$.
Then, for some pair, $(M,N)$ in $\{(M_{0},N_{0}),(M_{0}^{*},N_{0}^{*})\}$,
one of the following statements holds.
\begin{enumerate}[label=\textup{(\arabic*)}]
\item $M$ is one of $M(Q_{3})$ or $M(K_{5})\cong M(\qml{5})$, and $N$ is $M(K_{4})$;
\item $M$ is one of $\Upsilon_{6}$ or $\Upsilon_{6}^{*}$, and $N$ is $F_{7}\cong \Upsilon_{4}^{*}$;
\item $M$ is one of $M(H_{1})$, $M(H_{2})$, $M(H_{3})$, or $M(\qml{7})$,
and $N$ is $M(K_{3,3})\cong M(\cml{6})$;
\item $M$ is one of $M(Q_{3}^{\times})$, $M(Y_{9})$, $M(\qml{7})$, or $M(\cml{10})$,
and $N$ is $M(K_{5})\cong M(\qml{5})$;
\item $M$ is one of $A_{1}$, $A_{2}$, $A_{3}$, $A_{4}$, $A_{5}$,
$A_{6}$, or $\Upsilon_{8}$, and $N$ is $\Delta_{4}$;
\item $M$ is one of $B_{1}$, $B_{2}$, $B_{3}$, $B_{4}$, or $B_{5}$,
and $N$ is $P$;
\item $M$ is one of $C_{1}$, $C_{2}$, $C_{3}$, or $C_{4}$, and $N$ is $Q$;
\item $(M,N)=(D_{1},R)$;
\item $(M,N)=(E_{1},S)$; or
\item $(M,N)=(\Upsilon_{8},\Upsilon_{6})$.
\end{enumerate}
\end{theorem}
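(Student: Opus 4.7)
The approach will be computational, exploiting the sharp upper bound $|E(M_0)|\le 15$. First I would reduce the search space: by definition $|E(M_0)|-|E(N_0)|>3$ and $|E(N_0)|\ge 6$, so $6\le |E(N_0)|\le 11$ and $10\le |E(M_0)|\le 15$. Since the property of being a fascinating pair is preserved under duality, I would search only over pairs $(M,N)$ with $|E(M)|\le 15$ and then close the output under the involution $(M,N)\mapsto (M^{*},N^{*})$.

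Next I would build (or import) a catalog of all \ifc\ binary matroids on $n$ elements for $6\le n\le 15$, storing each isomorphism class by a canonical form of its representation matrix. For each canonical $M$ in the catalog with $|E(M)|\ge 10$, I would enumerate all single\dash element deletions $M\ba e$ and contractions $M/e$, retain those that are \ifc, canonicalise, and iterate. This yields, for each $M$, the full set $\mathcal{F}(M)$ of \ifc\ proper minors of $M$, together with the covering relation in the \ifc\ minor order restricted to $\{M'\preceq M:M'\text{ is \ifc}\}$. A pair $(M,N)$ with $N\in \mathcal{F}(M)$ is then fascinating precisely when $N$ is the unique maximal element of $\mathcal{F}(M)\setminus\{M\}$ in that order, i.e.\ every \ifc\ matroid $M'$ with $N\prec M'\prec M$ is empty, which is a direct look\dash up in the computed structure.

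With the candidate list in hand, the final step is verification: each pair produced by the search is matched to one of the entries in items (1)--(10), first by numerical invariants (ranks, element counts, duality class) and then by a canonical\dash form comparison against the explicit matroids $M(Q_3)$, $M(\qml{n})$, $\Upsilon_n$, $\Delta_n$, $M(H_i)$, $M(Q_3^{\times})$, $M(Y_9)$, and the listed $A_i, B_i, C_i, D_1, E_1, P, Q, R, S$ from \Cref{sect2}. Because the theorem is an ``if and only if'' existence statement up to duality, I also need to confirm that the search produces no pair outside this list; this is just the assertion that the computer output, projected modulo duality, has exactly the $31$ equivalence classes described.

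The main obstacle is not conceptual but one of combinatorial explosion and correctness. Naive enumeration of all binary matroids up to $15$ elements is infeasible, so I would restrict the catalog a priori to the \ifc\ ones by generating upward from small \ifc\ seeds using single\dash element extensions and coextensions and pruning by the connectivity test after each step; duality again halves the work. Isomorphism testing of binary matroids, done by canonical form of the standard representative matrix (or, for the graphic portion, by graph canonical labelling), is the inner loop and must be implemented carefully. A secondary obstacle is matching the output of the search against the named matroids of \Cref{sect2}: this requires unambiguous encodings of each $A_i$, $B_i$, $C_i$, $D_1$, $E_1$, etc., which I would prepare as explicit binary matrices before running the comparison pass.
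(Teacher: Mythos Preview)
Your overall plan---build a finite catalog, test each candidate pair, match against the named matroids---is the same shape as the paper's proof, but there is a real gap in how you propose to build both the catalog and the sets $\mathcal{F}(M)$. You say you will generate the \ifc\ catalog ``upward from small \ifc\ seeds using single\dash element extensions and coextensions and pruning by the connectivity test after each step,'' and likewise compute $\mathcal{F}(M)$ by taking single\dash element deletions and contractions, ``retain[ing] those that are \ifc,'' and iterating. Both steps are unsound for the same reason: an \ifc\ matroid need not have any \ifc\ single\dash element deletion or contraction. Indeed, the very existence of fascinating pairs says that there are \ifc\ matroids $M$ with no \ifc\ minor within three elements; for such $M$, your upward generation would never reach $M$, and your downward iteration from $M$ would produce $\mathcal{F}(M)=\emptyset$ and hence miss every fascinating pair $(M,N)$. (A smaller issue: you also require $N$ to be the \emph{unique} maximal element of $\mathcal{F}(M)$, but the theorem itself exhibits $M(\qml{7})$ with two incomparable fascinating partners, $M(K_{3,3})$ and $M(K_{5})$; the correct condition is simply that $N$ be maximal.)

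The paper's fix is to work one connectivity level down: it builds the catalog of all $3$\dash connected binary matroids with at most $15$ elements by single\dash element extensions and coextensions from $M(K_{4})$ together with the wheels (this is complete by the wheels\dash and\dash whirls theorem), and only then filters to the \ifc\ ones. Minor containment is tested directly, not by walking \ifc\ single\dash element chains. The paper also differs from your purely computational plan in one respect worth noting: it disposes of the cases $|E(N)|\in\{6,7,9\}$ by citing the chain theorem of \cite{CMO11}, Zhou's theorem \cite{Zho04}, and a lemma of Geelen--Zhou \cite{GZ06}, so the actual search is run only for $|E(N)|\in\{10,11\}$. Your approach of handling all $|E(N)|$ by machine would also work once the catalog is generated correctly, but the theoretical reductions cut the search down and give an independent check on the small cases.
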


With \Cref{maintheorem} in hand, it is easy to find the
pairs that are interesting but not fascinating:
there are only three (up to duality).

\begin{theorem}
\label{mtheorem2}
Assume that $(M_{0},N_{0})$ is an interesting pair that is not
fascinating and that $|E(M_{0})|\leq 15$.
Then there is a pair, $(M,N)$ in $\{(M_{0},N_{0}),(M_{0}^{*},N_{0}^{*},)\}$,
such that $(M,N)$ is either
$(M(\qml{7}),M(K_{4}))$,
$(\Upsilon_{8},F_{7},)$, or
$(\Upsilon_{8}^{*},F_{7})$.
\end{theorem}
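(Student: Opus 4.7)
The plan is to peel off one step from the top of a minor chain $N_0\prec\cdots\prec M_0$ and invoke Theorem~\ref{maintheorem}. Assume $(M_0,N_0)$ is interesting, not fascinating, and $|E(M_0)|\le 15$. The non-fascinating hypothesis furnishes an internally $4$\dash connected binary matroid $M''$ with $N_0\prec M''\prec M_0$, so the set $\mathcal{S}$ of all internally $4$\dash connected binary matroids $M''$ satisfying $N_0\prec M''\prec M_0$ is nonempty. Pick $M_1\in\mathcal{S}$ with $|E(M_1)|$ maximum. A short verification shows that $(M_0,M_1)$ is itself interesting: any internally $4$\dash connected $M'$ with $M_1\preceq M'\prec M_0$ automatically satisfies $N_0\prec M'\prec M_0$, so the interestingness of $(M_0,N_0)$ gives $|E(M_0)|-|E(M')|>3$. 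Moreover $(M_0,M_1)$ is fascinating, because any such $M'$ not isomorphic to $M_1$ would lie in $\mathcal{S}$ with strictly larger ground set, contradicting the maximality of $M_1$. Replacing $(M_0,N_0)$ by its dual if necessary, Theorem~\ref{maintheorem} now pins $(M_0,M_1)$ to one of the listed pairs.

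The next step is, for each listed pair $(M_0,M_1)$, to enumerate the internally $4$\dash connected binary proper minors $N$ of $M_1$ with $|E(N)|\ge 6$ and test each candidate pair $(M_0,N)$ for interestingness, i.e.\ that every internally $4$\dash connected binary $M'$ with $N\preceq M'\prec M_0$ satisfies $|E(M_0)|-|E(M')|>3$. Only two families of chains within Theorem~\ref{maintheorem} can yield a non-fascinating interesting pair: descending from $M_0=M(\qml{7})$ through $M_1\in\{M(K_5),M(K_{3,3})\}$ (by items~(1), (3), and~(4)) to $N=M(K_4)$; and descending from $M_0\in\{\Upsilon_8,\Upsilon_8^*\}$ through $M_1\in\{\Upsilon_6,\Upsilon_6^*\}$ (by items~(2) and~(10)) to $N=F_7$. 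All other listed $(M_0,M_1)$ admit no such candidate $N$.

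The main obstacle is the interestingness check, since the threatening intermediates have sizes $|E(M_0)|-1$, $|E(M_0)|-2$, or $|E(M_0)|-3$, and so do \emph{not} themselves appear in Theorem~\ref{maintheorem}; they must instead be produced or ruled out by direct inspection of the minor structure of each $M_0$. A typical elimination is $(\Upsilon_6,M(K_4))$: here $\Upsilon_6\ba\gamma\cong M^*(K_5)$ is an internally $4$\dash connected minor of $\Upsilon_6$ of size $10$ containing $M(K_4)$, collapsing the gap to $1$; this same $M^*(K_5)$ is cographic and hence has no $F_7$ minor, so it does not disqualify $(\Upsilon_8,F_7)$. Carrying out these inspections, aided by the same computer enumeration that proves Theorem~\ref{maintheorem}, rules out every candidate except the three pairs in the statement: in $(M(\qml{7}),M(K_4))$ the only internally $4$\dash connected intermediates are $M(K_5)$ and $M(K_{3,3})$, both of size $\le 10$, and in $(\Upsilon_8,F_7)$ and $(\Upsilon_8^*,F_7)$ the only internally $4$\dash connected intermediate containing $F_7$ is $\Upsilon_6$, respectively $\Upsilon_6^*$, of size $11$. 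The minimum gap is therefore $4$ in every case.
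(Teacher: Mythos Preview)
Your proposal is correct and follows essentially the same approach as the paper. Both arguments first observe that an interesting-but-not-fascinating pair $(M_0,N_0)$ sits below some fascinating pair $(M_0,M_1)$ with $N_0\prec M_1$ (the paper phrases this as an iterative chain argument, you phrase it as choosing $M_1$ of maximum size, but these are equivalent), then invoke \Cref{maintheorem} to list the possibilities for $(M_0,M_1)$, and finally run a computer-aided case check over all fascinating pairs to determine which proper internally $4$\dash connected minors $N$ of $M_1$ yield an interesting pair $(M_0,N)$; the paper packages this last step as the \texttt{GenerateInteresting} routine applied to each fascinating pair.
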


The following table shows the number of interesting pairs
(up to duality), where the larger matroid has $m$ elements
in its ground set, and the smaller has $n$ elements.
Note that none of the pairs we have listed 
contains two self-dual matroids, so if we were not taking duality
into account, we would just double the numbers in the table.

\begin{center}
\begin{tabular}{|c||c|c|c|c|c|c|}\hline
\diagbox{$n$}{$m$}&10&11&12&13&14 &15\\\hhline{|=|=|=|=|=|=|=|}
6&1  &    &1  &    &1  &\\\hline
7&   & 2  &    &    &   & 2\\\hline
8&   &   &    &    &   & \\\hline
9&   &   &    &   3 & 1  & \\\hline
10&   &   &    &    & 9  &2\\\hline
11&   &   &    &    &   &12\\\hline
\end{tabular}
\end{center}

Next we note the specialisation of our theorems
to graphic matroids.
Any graphs not already defined are described in
\Cref{sect2}.
Let $G$ be a simple, $3$\dash connected graph.
For any partition, $(X,Y)$, of the edge set,
let $V(X,Y)$ be the set of vertices incident with
edges in both $X$ and $Y$.
We say that $G$ is \emph{\ifc}
if, whenever $3\leq |X|\leq |Y|$ we have that
$|V(X,Y)|\geq 3$, with equality implying that
$X$ is either a triangle or the set of edges
incident with a vertex of degree $3$.
In other words, $G$ is \ifc\
if and only if $M(G)$ is an \ifc\
matroid.

\begin{theorem}
\label{maingraph}
Assume $G_{1}$ and $G_{2}$ are
\ifc\ graphs such that
$|E(G_{1})|\leq 15$,
and $G_{1}$ has a proper $G_{2}$\dash minor.
Assume also that if $G$ is
an \ifc\ graph such that
$G_{1}$ has a proper $G$\dash minor, and
$G$ has a $G_{2}$\dash minor, then
$|E(G_{1})|-|E(G)|>3$.
Then one of the following statements holds.
\begin{itemize}
\item $G_{1}$ is one of $K_{5}$, $Q_{3}$, $O$, or $\qml{7}$,
and $G_{2}$ is $K_{4}$;
\item $G_{1}$ is one of $H_{1}$, $H_{2}$, $H_{3}$, or \qml{7},
and $G_{2}$ is $K_{3,3}$;
\item $G_{1}$ is one of $Q_{3}^{\times}$, $Y_{9}$, \qml{7}, or \cml{10},
and $G_{2}$ is $K_{5}$.
\end{itemize}
\end{theorem}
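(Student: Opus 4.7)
The plan is to translate the graph-theoretic hypothesis into the binary matroid language, apply Theorems~\ref{maintheorem} and~\ref{mtheorem2} to the pair $(M(G_1), M(G_2))$, and then sift the resulting list for pairs that are graphic (or cographic) on both sides.

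First I would verify that $(M(G_1), M(G_2))$ is an interesting pair of \ifc\ binary matroids. Internal $4$\dash connectivity and the proper-minor relation transfer directly via the equivalence recorded just before the statement, and $|E(M(G_2))| \geq 6$ because $G_2$ is \ifc. The size-gap condition requires one short argument: if $M'$ were an \ifc\ binary matroid with $M(G_2) \preceq M' \prec M(G_1)$ and $|E(M(G_1))| - |E(M')| \leq 3$, then, because graphicness is minor-closed, $M' = M(G)$ for some graph $G$, and $G$ is \ifc\ by the equivalence, contradicting the hypothesis on $G_1$.

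Next, since $|E(M(G_1))| \leq 15$, Theorem~\ref{maintheorem} or Theorem~\ref{mtheorem2} produces a pair $(M,N) \in \{(M(G_1), M(G_2)), (M(G_1)^*, M(G_2)^*)\}$ belonging to their joint list of cases. I would walk through these cases, retaining only those where both $M$ and $N$ are graphic (giving $(G_1,G_2)$ directly) or both cographic (giving $(G_1,G_2)$ as the corresponding planar duals). Cases~(2) and~(5)--(10) of Theorem~\ref{maintheorem}, together with the $(\Upsilon_8, F_7)$ and $(\Upsilon_8^*, F_7)$ items of Theorem~\ref{mtheorem2}, all involve matroids introduced in \Cref{sect2} as non-graphic binary matroids (for example $F_7$, $\Upsilon_k$ for $k\geq 4$, $\Delta_k$ for $k\geq 4$, the $A_i$, $B_i$, $C_j$, $D_1$, $E_1$, $P$, $Q$, $R$, and $S$), so they contribute nothing. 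Case~(1) gives $(Q_3, K_4)$ and $(K_5, K_4)$ directly; using $M(Q_3)^*\cong M(O)$ and the self-duality of $M(K_4)$, the first of these also contributes $(O, K_4)$ via dualization, while $M(K_5)$ is not cographic. Combined with $(\qml{7}, K_4)$ from Theorem~\ref{mtheorem2}, this yields the first bullet. Case~(3) yields the second (noting $M(K_{3,3})^*$ is not graphic, so dualization contributes no further graphic pair), and case~(4) yields the third (noting $M(K_5)^*$ is not graphic).

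The main obstacle is bookkeeping: one must recognize each ``exotic'' matroid appearing in Theorems~\ref{maintheorem} and~\ref{mtheorem2} as non-graphic, and for each genuinely graphic pair one must identify the dual in order to catch any additional graphic pair produced by dualization---here, the octahedron arising as the planar dual of the cube is the only new graph produced in this way.
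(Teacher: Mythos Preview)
Your approach is exactly the one the paper intends: \Cref{maingraph} is stated there without proof, simply as the specialisation of \Cref{maintheorem} and \Cref{mtheorem2} to graphic matroids, and your translation of the graph hypothesis into the matroid setting is correct (Whitney's theorem handles the passage between graph minors and matroid minors for the $3$\dash connected graphs involved).

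There is one point where your argument, as written, is incomplete. When you dismiss cases~(2) and (5)--(10) of \Cref{maintheorem} and the two $\Upsilon_{8}$ pairs of \Cref{mtheorem2}, you say the matroids involved are ``introduced in \Cref{sect2} as non-graphic binary matroids'' and conclude ``so they contribute nothing''. Non-graphicness only rules out the possibility $(M,N)=(M(G_{1}),M(G_{2}))$; you must also rule out $(M,N)=(M(G_{1})^{*},M(G_{2})^{*})$, which requires at least one of $M,N$ to be non-\emph{cographic}. The fix is easy and uniform: in every one of those cases the matroid $N$ is non-regular (it has an $F_{7}$- or $F_{7}^{*}$-minor---for instance $\Delta_{4}/8$ simplifies to $F_{7}$; $Q$, $S$ contain $\Delta_{4}$; $R/9\cong\Delta_{4}$; $P/x$ simplifies to $F_{7}$ for suitable $x$; and $(\Upsilon_{6},F_{7})$ is itself a fascinating pair), hence $N$ is neither graphic nor cographic, and both possibilities are excluded at once. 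With that adjustment, your case analysis---including the recovery of $(O,K_{4})$ from the dualisation of $(M(Q_{3}),M(K_{4}))$---is complete and matches the paper's list.
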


In many of the pairs in \Cref{maintheorem}
or \Cref{mtheorem2}, we encounter structures that are familiar from
the analysis in the rest of the project.
These structures lead to operations that we can use to
produce a smaller \ifc\ matroid from a larger one.
Four such operations will be documented in \Cref{moves}.
In the following results, we explain exactly when it is possible to
perform them on our fascinating and interesting pairs.

\begin{theorem}
\label{goodmovefirst}
Let the pair $(M,N)$ be as described in one of the statements
\textup{(1)}--\textup{(10)} in \textup{\Cref{maintheorem}}.
If $(M,N)$ is not one of $(M(Q_3),M(K_4))$, $(M(K_5),M(K_4))$,
$(\Upsilon_6, F_{7})$, $(\Upsilon_{6}^{*}, F_{7})$, $(M(\qml{7}),M(K_{3,3}))$,
or $(\Upsilon_8,\Delta_{4})$,
then $N$ can be obtained from $M$
(or $N^{*}$ can be obtained from $M^{*}$) by one of the
following four operations:
\begin{enumerate}[label=\textup{(\arabic*)}]
\item trimming a ring of bowties;
\item deleting the central cocircuit of a good augmented $4$\dash wheel;
\item a ladder-compression move; or
\item trimming an open rotor chain.
\end{enumerate}
\end{theorem}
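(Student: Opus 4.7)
The plan is to proceed by direct case analysis over the pairs $(M,N)$ enumerated in statements (1)--(10) of \Cref{maintheorem}, excluding the six pairs listed as exceptions. For each remaining pair, I would exhibit an explicit substructure in $M$ (or $M^{*}$) matching one of the four configurations defined in \Cref{moves}---a ring of bowties, a good augmented $4$-wheel, a ladder, or an open rotor chain---and verify that executing the corresponding operation produces a matroid isomorphic to $N$ (or $N^{*}$). Since every matroid in the list is explicitly named and small (at most $15$ elements), this is a finite mechanical check: for the graphic matroids one works with the underlying graph, and for the non-graphic ones one works with a fixed binary representation, consistent with the exhaustive computer enumeration that produced \Cref{maintheorem}.

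First I would recall the four operations from \Cref{moves} so as to state precisely which elements are deleted, contracted, or replaced in each case, and which collection of circuits, triangles, and triads must be verified to certify the presence of the configuration. Next I would group the remaining pairs according to the operation used. From the shape of the pairs, I expect roughly the following split: the large graphic cases in (3)--(4), namely $M(H_{1})$, $M(H_{2})$, $M(H_{3})$, $M(\qml{7})$, $M(Q_{3}^{\times})$, $M(Y_{9})$, $M(\cml{10})$, will yield ring-of-bowties or open-rotor-chain substructures that can be read off their graphs; the $\Upsilon$-pair $(\Upsilon_{8},\Upsilon_{6})$ in (10) should arise from a ladder-compression move, consistent with the ladder-like structure of the triadic M\"obius matroids; the pairs in (5) involving the $A_{i}$'s with $N=\Delta_{4}$, along with those in (6)--(9) involving $B_{i}$, $C_{j}$, $D_{1}$, $E_{1}$, will be distributed among the augmented $4$-wheel and bowtie operations, visible in a suitable binary matrix.

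For each individual pair the verification takes a uniform form: (i) name the elements participating in the configuration; (ii) check that the triangles, triads, and additional circuits required by the configuration's definition are present in $M$ (or $M^{*}$); (iii) carry out the deletion or contraction prescribed by the operation; and (iv) certify that the result is isomorphic to $N$ (or $N^{*}$) by exhibiting an explicit bijection of ground sets respecting rank, or by appealing to the computer-checked isomorphism test already used for \Cref{maintheorem}. Steps (i)--(iii) are straightforward; step (iv) will in most cases be done by hand but can be confirmed mechanically.

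The main obstacle is bookkeeping: roughly two dozen pairs must be handled, many of them non-graphic rank-$8$ or rank-$9$ matroids on $14$ or $15$ elements, so a careful tabular presentation---listing for each pair the configuration found, the operation applied, and the identification of the output with $N$---is essential to keep the proof legible. A secondary difficulty is that for the non-graphic $A_{i}$, $B_{i}$, $C_{j}$, $D_{1}$, and $E_{1}$, a representation must be fixed and cited so that the reader can, in principle, reproduce the configuration search; the natural choice is to use the same representations generated by the computer search underlying \Cref{maintheorem}, so that the case analysis and the earlier enumeration are cross-certifiable.
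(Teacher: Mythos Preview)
Your plan is exactly the paper's approach: \Cref{goodmove} is proved in \Cref{sect2} via Propositions~\ref{prop2}--\ref{prop8} and \Cref{prop1}, each of which handles a cluster of pairs by naming an explicit configuration (with element labels relative to a fixed representation) and checking that the corresponding move yields $N$ or $N^{*}$. Your rough guess at which operation goes with which pair is slightly off---for instance $M(H_{2})$ and $M^{*}(Q_{3}^{\times})$ use the augmented $4$\dash wheel, $M^{*}(H_{3})$ and $M(\qml{7})$ use ladder compression, and $A_{4}^{*}$ is the one case needing an open rotor chain---but the method is identical.
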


The next \namecref{cor2} deals with the three interesting
pairs identified in \Cref{mtheorem2}.

\begin{corollary}
\label{cor2}
Let $(M,N)$ be
$(M(\qml{7}),M(K_{4}))$,
$(\Upsilon_{8},F_{7})$, or
$(\Upsilon_{8}^{*},F_{7})$.
Then there is an \ifc\ binary matroid, $M_{0}$, such that
$N\prec M_{0}\prec M$, and either $M_{0}$ can be obtained from $M$
(or $M_{0}^{*}$ can be obtained from $M^{*}$)
by a ladder-compression move.
\end{corollary}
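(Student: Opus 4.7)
The plan is to exhibit, for each of the three interesting pairs listed in the statement, an explicit internally $4$\dash connected intermediate matroid $M_0$ with $N\prec M_0\prec M$, and then to verify that $M_0$ (or $M_0^*$) arises from $M$ (or $M^*$) by a single ladder\dash compression move as defined in \Cref{moves}. Existence of such an $M_0$ is guaranteed in the abstract by \Cref{mtheorem2}, because the three pairs are \emph{interesting} but not \emph{fascinating}; the content of the corollary is that each such witness can be chosen from the short list supplied by \Cref{maintheorem} and connected to $M$ by a ladder\dash compression move in particular.

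The candidates are selected by scanning \Cref{maintheorem} for internally $4$\dash connected matroids lying between $N$ and $M$. For $(M(\qml{7}),M(K_4))$, I take $M_0=M(K_5)\cong M(\qml{5})$: by item (4) of \Cref{maintheorem} the pair $(M(\qml{7}),M(K_5))$ is fascinating, so $M(K_5)\prec M(\qml{7})$, and $M(K_4)\prec M(K_5)$ is standard. For $(\Upsilon_8,F_7)$, I take $M_0=\Upsilon_6$: by item (10) the pair $(\Upsilon_8,\Upsilon_6)$ is fascinating, and since $\Upsilon_6\ba\gamma\cong M^*(\qml{5})\cong M^*(K_5)$ contracts to $F_7$, we have $F_7\cong\Upsilon_4^*\prec\Upsilon_6$. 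For the dual pair $(\Upsilon_8^*,F_7)$, I set $M_0=\Upsilon_6^*$ and note that since $F_7$ is self\dash dual, $F_7\prec\Upsilon_6$ gives $F_7\prec\Upsilon_6^*$ as well; the ladder\dash compression move is then applied to $M^*=\Upsilon_8$.

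Once the candidate $M_0$ is in hand in each case, the remaining task is to identify, inside each of $M(\qml{7})$ and $\Upsilon_8$, the configuration of elements on which a ladder\dash compression move (in the sense to be recalled in \Cref{moves}) can be performed, and to check that the resulting matroid is isomorphic to $M_0$. For the graphic pair, $\qml{7}$ has a natural ladder structure running around the $7$\dash cycle to which the ``antipodal'' edges add rungs; collapsing two consecutive rungs ought to leave $\qml{5}\cong K_5$. For the M\"obius\dash matroid pairs, the recursive construction $\Upsilon_{n+1}$ from $M(\mathcal{W}_n)$ together with the distinguished element $\gamma$ endows $\Upsilon_8$ with the precise ladder on which a compression yields $\Upsilon_6$, and this operation commutes with taking duals.

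The main obstacle I expect is not the minor relationships, which are transparent, but the bookkeeping required to match the explicit structural reductions $\qml{7}\to K_5$ and $\Upsilon_8\to\Upsilon_6$ to the formal description of a ladder\dash compression move. Concretely, after fixing matrix representations of $\Upsilon_8$ and $M(\qml{7})$, I would locate the ladder configuration, perform the move element by element, and verify that the output is isomorphic to the claimed $M_0$ rather than some proper minor that drops rank too far or fails to be \ifc. Given the small sizes involved ($|E(M)|\leq 15$), the isomorphism check at the end can be carried out either by hand or with the same exhaustive computer search used to prove \Cref{maintheorem}.
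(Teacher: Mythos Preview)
Your choice of $M_{0}$ in each case matches the paper exactly, and the plan to certify the ladder\dash compression moves $M(\qml{7})\to M(\qml{5})$ and $\Upsilon_{8}^{*}\to\Upsilon_{6}^{*}$ is the same as what the paper does (the latter is its \Cref{prop1}). So the architecture of your argument is fine.

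However, two of the justifications you give for the minor relations are false. First, you write that $\Upsilon_{6}\ba\gamma\cong M^{*}(K_{5})$ ``contracts to $F_{7}$''. It does not: $M^{*}(K_{5})$ is cographic, hence regular, and therefore has no $F_{7}$\dash minor. The correct way to see $F_{7}\prec\Upsilon_{6}$ is simply to cite item~(2) of \Cref{maintheorem}, which records that $(\Upsilon_{6},F_{7})$ is a fascinating pair. Second, you assert that $F_{7}$ is self\dash dual in order to deduce $F_{7}\prec\Upsilon_{6}^{*}$ from $F_{7}\prec\Upsilon_{6}$. The Fano matroid is not self\dash dual: $F_{7}$ has rank~$3$ while $F_{7}^{*}$ has rank~$4$. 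Again, item~(2) of \Cref{maintheorem} already gives $(\Upsilon_{6}^{*},F_{7})$ as a fascinating pair, so $F_{7}\prec\Upsilon_{6}^{*}$ follows directly.

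A smaller point: ladder\dash compression does not literally ``commute with duals'', and you have the direction slightly tangled. The move the paper establishes is on the \emph{starred} side, $\Upsilon_{r+2}^{*}\to\Upsilon_{r}^{*}$. For $(M,N)=(\Upsilon_{8},F_{7})$ with $M_{0}=\Upsilon_{6}$, this is a move from $M^{*}$ to $M_{0}^{*}$; for $(M,N)=(\Upsilon_{8}^{*},F_{7})$ with $M_{0}=\Upsilon_{6}^{*}$, the \emph{same} move is now from $M$ to $M_{0}$. Your sentence ``the ladder\dash compression move is then applied to $M^{*}=\Upsilon_{8}$'' has this backwards.
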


We note that some of the exceptional pairs in \Cref{goodmovefirst}
are dealt with by some of the specific scenarios from our main
theorem, which appears in \cite{CMOIX}.
In particular, since $\Delta_{3}\cong F_{7}$, we see that if
$(M,N)$ is $(\Upsilon_6, F_{7})$ or $(\Upsilon_8,\Delta_{4})$, then
$M$ is a triadic M\"{o}bius matroid of rank $2r$, and
$N$ is a triangular M\"{o}bius matroid of rank $r$.
If $(M,N)$ is $(M(\qml{7}),M(\cml{6}))$, then $M$ is the
cycle matroid of a quartic M\"{o}bius ladder, and
$N$ is the cycle matroid of a cubic M\"{o}bius ladder,
and $r(N)=r(M)-1$.
Thus the only truly exceptional pairs are $(M(K_5),M(K_4))$,
$(M(Q_3),M(K_4))$, and $(\Upsilon_{6}^{*}, F_{7})$.

We prove \Cref{maintheorem,mtheorem2} with an exhaustive search,
using the matroid functionality of the sage mathematics package,
(Version $6.10$) \cite{Stein}.
All the computations performed in this search were
performed on a single desktop computer, and took
a total of approximately $55$ hours of computation.
The code used in the search is available from
\url{http://homepages.ecs.vuw.ac.nz/~mayhew/splittertheorem.shtml}.
Some of the objects created during the search,
such as the catalogue of $3$\dash connected binary
matroids with at most $15$ elements,
required a non-trivial amount of computation.
Those objects are also available at the same site.

\section{Winning Moves}
\label{moves}

In this section, we describe four different structures that appear
naturally when we examine \ifc\ binary matroids.
Each structure allows us to perform certain deletions and contractions
to obtain an \ifc\ proper minor.
These operations play an essential role in the statement of our splitter
theorem.
In \Cref{sect2}, we analyse the pairs in \Cref{maintheorem,mtheorem2},
and demonstrate that, in many cases, these structures appear there also.

Recall that a $3$\dash connected matroid is \ifc\
when every $3$\dash separation has a triangle or a triad on one side.
A $4$\dash element fan is a set $\{x_1,x_2,x_3,x_4\}$,
where $\{x_1,x_2,x_3\}$ is a triangle and $\{x_2,x_3,x_4\}$ is a triad.
A $3$\dash connected matroid, $M$, is
\emph{$(4,4,S)$\dash connected} if, for every $3$\dash separation,
$(X,Y)$, of $M$, one of $X$ and $Y$ is a triangle, a triad, or a
$4$\dash element fan.

\begin{figure}[htb]
\centering
\includegraphics[scale=1.1]{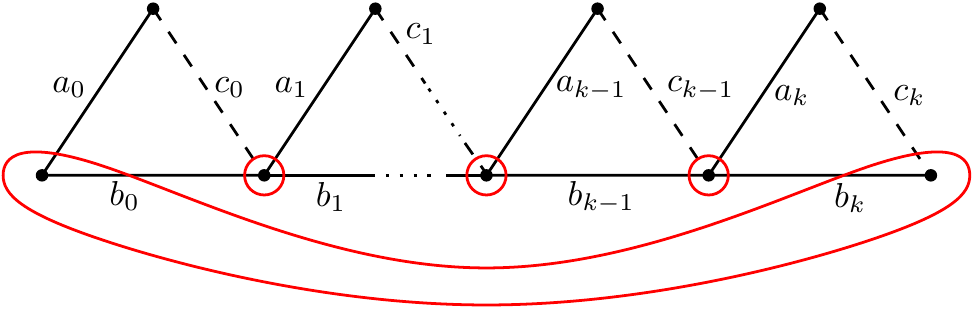}
\caption{A bowtie ring.  All elements are distinct.}
\label{btringfig}
\end{figure}

A \emph{bowtie} consists of a pair of disjoint triangles
whose union contains a $4$\dash element cocircuit.
Assume $k\geq 2$, and $T_0,T_1, \dots,T_k$ is a sequence of
pairwise disjoint triangles.
Let $T_i$ be $\{a_i,b_i,c_i\}$ for $i\in \{0,1,\ldots, k\}$.
Assume $D_i=\{b_i,c_i,a_{i+1},b_{i+1}\}$ is
a cocircuit for $i\in \{0,1,\ldots, k-1\}$, and in addition,
$D_{k}=\{b_{k},c_{k},a_{0},b_{0}\}$ is a cocircuit.
Then we say that $T_0,D_0,T_1,D_1,\dots ,T_k,D_k$ is a \emph{ring of bowties}.
Although the matroid $M$ we are dealing with need not be graphic, we follow the
convention begun in \cite{CMO11} of using a modified graph diagram to keep
track of some of the circuits and cocircuits in $M$.   
\Cref{btringfig} shows  such a modified graph diagram.  
Each of the cycles in such a graph diagram corresponds to a circuit  of $M$
while a circled vertex indicates a known cocircuit of $M$.
If $M'=M\ba\{c_0,c_1,\dots ,c_k\}$, then we say that $M'$ is obtained from
$M$ by \emph{trimming a ring of bowties}.  

\begin{figure}[htb]
\centering
\includegraphics[scale=1.1]{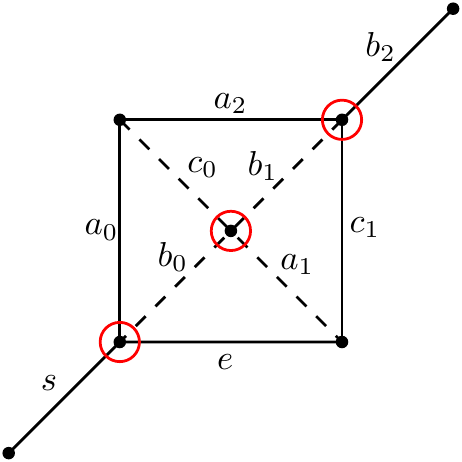}
\caption{An augmented $4$-wheel.  All displayed elements are distinct.}
\label{a4w}
\end{figure}

An \emph{augmented $4$\dash wheel} is represented by the modified graph
diagram in \Cref{a4w}, where the four dashed edges form the
\emph{central cocircuit}. 
 If a matroid $M$ contains the structure in \Cref{a4w} and $M\ba e$ is
\ffsc, then we say that the augmented $4$\dash wheel is \emph{good}.
We refer to the operation of deleting the four dashed edges as
\emph{removing the central cocircuit of an augmented $4$\dash wheel}.  

\begin{figure}[htb]
\centering
\includegraphics[scale=1.1]{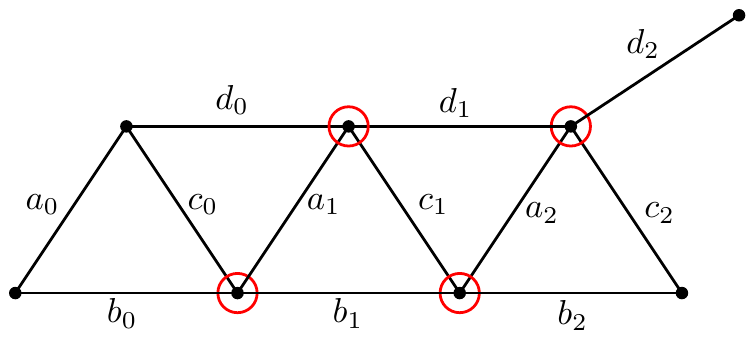}
\caption{The elements shown are distinct.}
\label{laddery}
\end{figure}

Our third structure requires a special four-element move.  
If $M$ contains the structure in \Cref{laddery},
then we say that $M\ba c_{1},c_2 /d_{1},b_2$ is obtained from
$M$ by a \emph{ladder-compression move}.  

\begin{figure}[htb]
\centering
\includegraphics[scale=1.1]{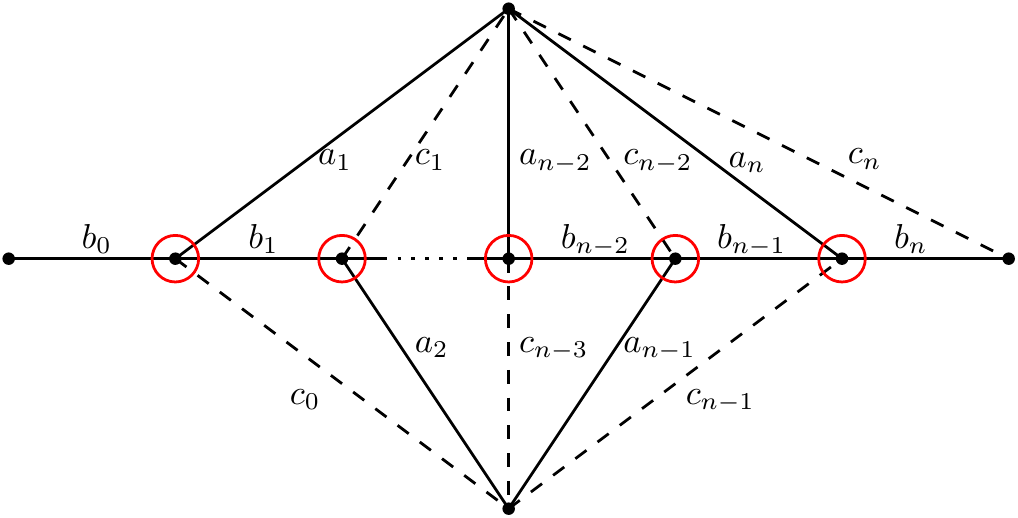}
\caption{All the elements shown are distinct.}
\label{gmcdashed}
\end{figure}

Finally, we consider the structure in \Cref{gmcdashed}.
Note that $n$ may be either even or odd.
When there are at least three dashed elements, we refer to
the structure in \Cref{gmcdashed} as an \emph{open rotor chain}
and we refer to the operation of deleting the dashed elements as
\emph{trimming an open rotor chain}.

\section{The special graphs and matroids}
\label{sect2}

This section has two purposes.
First, we introduce all the graphs and matroids
that feature in \Cref{maintheorem}, which we now restate.

\begin{theorem}
\label{repeat}
Assume that $(M_{0},N_{0})$ is a fascinating pair and $|E(M_{0})|\leq 15$.
Then, for some pair, $(M,N)$ in $\{(M_{0},N_{0}),(M_{0}^{*},N_{0}^{*})\}$,
one of the following statements holds.
\begin{enumerate}[label=\textup{(\arabic*)}]
\item $M$ is one of $M(Q_{3})$ or $M(K_{5})\cong M(\qml{5})$, and $N$ is $M(K_{4})$;
\item $M$ is one of $\Upsilon_{6}$ or $\Upsilon_{6}^{*}$, and $N$ is $F_{7}\cong \Upsilon_{4}^{*}$;
\item $M$ is one of $M(H_{1})$, $M(H_{2})$, $M(H_{3})$, or $M(\qml{7})$,
and $N$ is $M(K_{3,3})\cong M(\cml{6})$;
\item $M$ is one of $M(Q_{3}^{\times})$, $M(Y_{9})$, $M(\qml{7})$, or $M(\cml{10})$,
and $N$ is $M(K_{5})\cong M(\qml{5})$;
\item $M$ is one of $A_{1}$, $A_{2}$, $A_{3}$, $A_{4}$, $A_{5}$,
$A_{6}$, or $\Upsilon_{8}$, and $N$ is $\Delta_{4}$;
\item $M$ is one of $B_{1}$, $B_{2}$, $B_{3}$, $B_{4}$, or $B_{5}$,
and $N$ is $P$;
\item $M$ is one of $C_{1}$, $C_{2}$, $C_{3}$, or $C_{4}$, and $N$ is $Q$;
\item $(M,N)=(D_{1},R)$;
\item $(M,N)=(E_{1},S)$; or
\item $(M,N)=(\Upsilon_{8},\Upsilon_{6})$.
\end{enumerate}
\end{theorem}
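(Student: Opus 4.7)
The plan is an exhaustive computer enumeration, following the strategy outlined at the end of \Cref{sect1}. I would proceed in three stages.

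First, I would build a catalogue $\mathcal{M}$ of all \ifc\ binary matroids on at most $15$ elements, up to isomorphism. To produce this, I would generate all $3$\dash connected binary matroids on $n\leq 15$ elements by starting from the smallest $3$\dash connected binary matroids and repeatedly performing single\dash element extensions and coextensions, using a canonical\dash form test (supplied by the sage matroid library) to discard isomorphic duplicates at each step. I would then filter for internal $4$\dash connectivity by checking, for each matroid, that every $3$\dash separation has a side of size $3$.

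Second, I would precompute the minor relation $\preceq$ on $\mathcal{M}$. The covering relations can be generated bottom\dash up: for each $M\in\mathcal{M}$ and each $e\in E(M)$, I would identify the isomorphism classes of $M\ba e$ and $M/e$ in the catalogue whenever these are $3$\dash connected. Transitive closure then supplies fast $N\preceq M$ queries. The result is the Hasse diagram of $(\mathcal{M},\preceq)$ annotated with ground-set sizes.

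Third, for each pair $(M,N)$ in $\mathcal{M}\times\mathcal{M}$ with $|E(N)|\geq 6$, $|E(M)|-|E(N)|>3$, and $N\prec M$, I would decide whether the pair is interesting and whether it is fascinating. The pair is interesting if and only if every $M'\in\mathcal{M}$ satisfying $N\preceq M'\prec M$ has $|E(M)|-|E(M')|>3$; it is fascinating if and only if, in addition, no $M'\in\mathcal{M}$ with $M'\not\cong N$ satisfies $N\prec M'\prec M$. Both conditions reduce to chain queries in the precomputed minor poset. Finally, I would partition the surviving pairs into duality orbits $\{(M_0,N_0),(M_0^*,N_0^*)\}$ and match each orbit to an item of \textup{(1)}--\textup{(10)} by constructing each named matroid directly ($M(Q_3)$, $\Upsilon_6$, $M(H_i)$, $A_i$, $B_i$, $C_i$, $D_1$, $E_1$, $P$, $Q$, $R$, $S$, and the relevant cycle matroids $M(\cml{2n})$ and $M(\qml{2n+1})$) and running the canonical\dash form comparison against the list of survivors.

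The principal obstacle is the first stage: the number of $3$\dash connected binary matroids on $15$ elements is large, and canonical\dash form isomorphism rejection during single\dash element extension dominates the running time. The $55$ hours of computation reported in \Cref{sect1} are almost entirely consumed there. Once $\mathcal{M}$ is available, the second and third stages are comparatively cheap, and in practice I would sanity\dash check the output by confirming that the surviving pairs are closed under duality and that the interesting\dash but\dash not\dash fascinating pairs coming out of the same search match \Cref{mtheorem2}.
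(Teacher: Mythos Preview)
Your overall strategy---exhaustive enumeration and then filtering for fascinating pairs---is the paper's, but stage~2 as written contains a genuine gap. You propose to compute the minor relation on $\mathcal{M}$ by recording, for each $M\in\mathcal{M}$, the single-element minors $M\ba e$ and $M/e$, and then taking transitive closure. This fails precisely because of the phenomenon the theorem characterises: a covering relation in the poset $(\mathcal{M},\preceq)$ need not come from a single-element move. Every fascinating pair $(M,N)$ is by definition a covering relation in $(\mathcal{M},\preceq)$ with $|E(M)|-|E(N)|>3$, so your transitive closure---built only from edges leaving elements of $\mathcal{M}$---would not even record that $N\prec M$ for the very pairs you are trying to find. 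The fix is either to compute single-element covering relations over the full $3$\dash connected catalogue (where the splitter theorem guarantees they generate the minor order, wheels aside) and only then restrict to $\mathcal{M}$, or to dispense with precomputation altogether and call a direct minor test for each candidate pair, as the paper in fact does.

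The paper also builds in two optimisations you omit. It disposes of the cases $|E(N)|\leq 9$ theoretically---the chain theorem of \cite{CMO11} for $N\cong M(K_4)$, Zhou \cite{Zho04} for $N\cong F_7$, and Geelen--Zhou \cite{GZ06} for $N\cong M(K_{3,3})$---so that the computer search only has to handle $|E(N)|\in\{10,11\}$. It also assumes $r(M)\leq 7$ by duality, roughly halving the catalogue. Neither is needed for correctness, but together they matter for feasibility; your claim that catalogue construction dominates the $55$ hours is not quite right, since the paper's main minor-search loop over the $15$\dash element matroids itself consumes about $21$ hours.
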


In many of the pairs from this theorem, it is possible to apply one
of the four moves described in \Cref{moves}.
Thus the second purpose of this section is to document these moves, and
ultimately prove \Cref{goodmovefirst},
which we restate next.

\begin{theorem}
\label{goodmove}
Let the pair $(M,N)$ be as described in one of the statements
\textup{(1)}--\textup{(10)} in \textup{\Cref{maintheorem}}.
If $(M,N)$ is not one of $(M(Q_3),M(K_4))$, $(M(K_5),M(K_4))$,
$(\Upsilon_6, F_{7})$, $(\Upsilon_{6}^{*}, F_{7})$, $(M(\qml{7}),M(K_{3,3}))$,
or $(\Upsilon_8,\Delta_{4})$,
then $N$ can be obtained from $M$
(or $N^{*}$ can be obtained from $M^{*}$) by one of the
following four operations:
\begin{enumerate}[label=\textup{(\arabic*)}]
\item trimming a ring of bowties;
\item deleting the central cocircuit of a good augmented $4$\dash wheel;
\item a ladder-compression move; or
\item trimming an open rotor chain.
\end{enumerate}
\end{theorem}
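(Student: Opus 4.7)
The plan is to prove \Cref{goodmove} by a direct case analysis over each pair $(M,N)$ enumerated in statements (1)--(10) of \Cref{repeat} that is not among the six excluded pairs. For each such pair the task is to exhibit, in either $M$ or its dual $M^*$, an explicit copy of one of the four winning structures of \Cref{moves}, and then to verify that the associated sequence of deletions and contractions produces a matroid isomorphic to $N$ (respectively $N^*$). A preparatory step collects concrete descriptions of every matroid in the statement --- binary representing matrices for the nongraphic matroids $A_1,\dots,A_6$, $B_1,\dots,B_5$, $C_1,\dots,C_4$, $D_1$, $E_1$, $P$, $Q$, $R$, $S$, $\Upsilon_6$, $\Upsilon_8$, and explicit edge lists for the graphs $H_1$, $H_2$, $H_3$, $Q_3^{\times}$, $Y_9$, $\qml{7}$, and $\cml{10}$ --- all of which have already been introduced in \Cref{sect2}.

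For each non-exceptional pair the argument splits into three tasks. First, locate in $M$ the structures of \Cref{btringfig,a4w,laddery,gmcdashed}: search for a maximal ring or linear chain of pairwise disjoint triangles glued together by $4$-cocircuits, or for the specific triangle/triad/cocircuit template of the augmented $4$-wheel or the ladder-compression figure. Second, match one of the candidates against an operation whose effect on rank and ground-set size is consistent with that of the pair $(M,N)$; if no match appears in $M$, repeat the search in $M^*$. Third, carry out the prescribed deletions and contractions and verify isomorphism with $N$ or $N^*$. Heuristically one expects the ladder-compression move to account for the pairs in which a ladder-like sequence of triangles and cocircuits is already built into $M$, the bowtie-ring and open-rotor-chain operations to account for the pairs with M\"obius-type cyclic or linear triangle-cocircuit patterns (for example $\Upsilon_8\to\Upsilon_6$, $M(\qml{7})\to M(K_{3,3})$, and $M(\cml{10})\to M(K_5)$), and the augmented $4$-wheel to handle whatever cases resist the other three templates.

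The main obstacle is twofold. First, the definition of a \emph{good} augmented $4$-wheel includes the requirement that $M\ba e$ be $(4,4,S)$-connected, so whenever operation (2) is invoked a separate connectivity analysis of the $3$-separations of $M\ba e$ is required, and a candidate augmented $4$-wheel that turns out not to be good will force us to fall back on another template. Second, the nongraphic matroids $A_i$, $B_i$, $C_j$, $D_1$, $E_1$ admit no convenient pictorial representation, so identifying their triangles, triads, and $4$-cocircuits by hand is error-prone. The natural remedy is to reuse the sage infrastructure already used to prove \Cref{repeat}: for each concrete matroid, a short routine enumerates all triangles, triads, and $4$-cocircuits, matches them against the four templates, performs the prescribed operation, and tests isomorphism against $N$. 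The written proof then records the structure found and checks by inspection that it really does fit the template; verifying in addition that each of the six exceptional pairs genuinely admits none of the four structures serves as a useful consistency check on the exclusion list in the statement of \Cref{goodmove}.
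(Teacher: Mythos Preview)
Your plan is essentially the paper's own approach: the proof of \Cref{goodmove} in \Cref{sect2} is exactly a case-by-case verification, carried out in Propositions~\ref{prop2}--\ref{prop8} and~\ref{prop1}, in which each non-exceptional pair is assigned an explicit labelling of a bowtie ring, augmented $4$\dash wheel, ladder segment, or open rotor chain, and the resulting minor is identified. Your three-step protocol (enumerate triangles, triads, and $4$\dash cocircuits; match templates; verify isomorphism) is precisely what underlies those propositions, and using \texttt{sage} to locate the candidate structures is entirely in the spirit of the rest of the paper.

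Two of your heuristic guesses are off and would send you in the wrong direction before the machine search corrected you. First, $(M(\qml{7}),M(K_{3,3}))$ is one of the six excluded pairs, so it should not appear in your case list at all. Second, the pair $(\Upsilon_{8},\Upsilon_{6})$ is handled by a ladder-compression move on the duals (this is \Cref{prop1}, proved for general even $r\ge 6$), not by a bowtie ring or open rotor chain; the M\"obius structure here is of the ladder type, not the cyclic-bowtie type. Finally, your proposed consistency check --- showing that the six exceptional pairs admit none of the four moves --- is not part of what \Cref{goodmove} asserts. The theorem makes no claim about those pairs, and indeed the paper explains just after \Cref{cor2} that several of them are covered by other scenarios of the main splitter theorem; a failure of that check would not contradict \Cref{goodmove}.
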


Now we start describing various graphs and matroids, beginning with
the graphs $K_{4}$, $K_{5}$, and $Q_{3}$,
all of which are illustrated in \Cref{fig16}.
The graph $Q_{3}$ is also known as the \emph{cube graph}.
\Cref{fig16} also shows the \emph{octahedron graph},
$O$, which is the planar dual of $Q_{3}$.

\begin{figure}[htb]
\centering
\includegraphics[scale=1.1]{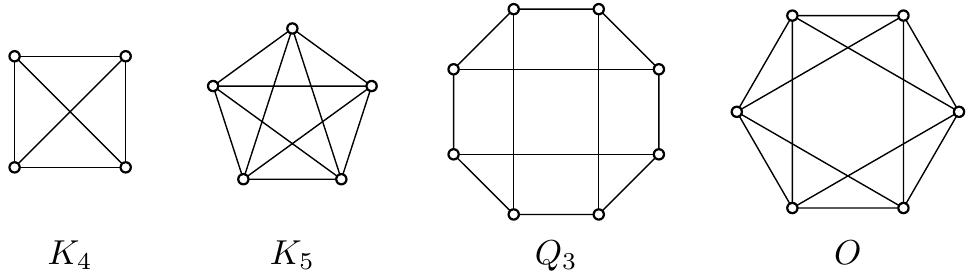}
\caption{Graphs $K_{4}$, $K_{5}$, $Q_{3}$, and $O$.}
\label{fig16}
\end{figure}

In Lemma~2.3 of \cite{GZ06}, Geelen and Zhou describe
five \ifc\ graphs having $K_{3,3}\cong \cml{6}$ as a minor.
One of the five is \cml{8}, which has only $12$ edges.
Another is isomorphic to \qml{7}.
Let the other three graphs be $H_{1}$, $H_{2}$, and $H_{3}$.
These are shown in \Cref{fig17}.

\begin{figure}[htb]
\centering
\includegraphics[scale=1.1]{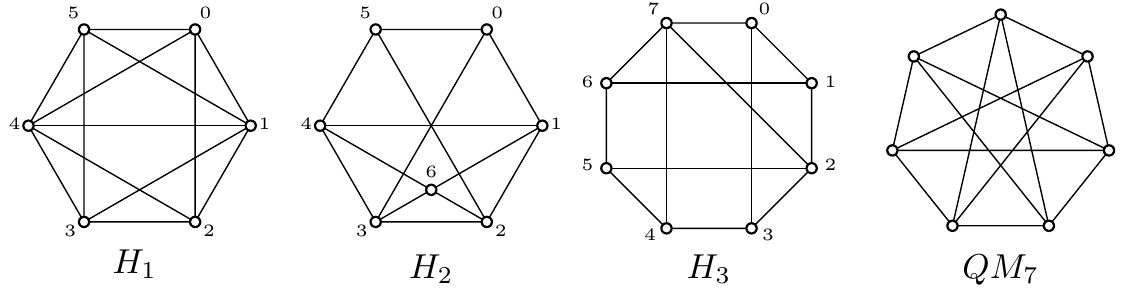}
\caption{Graphs $H_{1}$, $H_{2}$, $H_{3}$, and \qml{7}.}
\label{fig17}
\end{figure}

\begin{proposition}
\label{prop2}
Let $(M,N)$ be one of the pairs
$(M(H_{1}),M(K_{3,3}))$,
$(M(H_{2}),M(K_{3,3}))$, or
$(M^{*}(H_{3}),M^{*}(K_{3,3}))$.
Then $N$ is obtained from $M$ by trimming a bowtie ring,
deleting the central cocircuit from a good augmented $4$\dash wheel,
or a ladder-compression move.
\end{proposition}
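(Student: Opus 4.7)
The plan is to handle the three pairs separately, in each case exhibiting the relevant structure in $M$ explicitly on a labelled copy of the underlying graph, performing the prescribed operation, and then verifying that the resulting minor is $N$. Since $|E(H_i)|=13$ and $|E(K_{3,3})|=9$, each operation must remove exactly four elements, which matches the central cocircuit of an augmented $4$-wheel, the four elements of a ladder-compression move, or a ring of four bowties (a cycle $T_{0},D_{0},T_{1},D_{1},T_{2},D_{2},T_{3},D_{3}$ of triangles, trimmed by deleting one element per triangle).

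For $(M(H_{1}),M(K_{3,3}))$ and $(M(H_{2}),M(K_{3,3}))$ I would begin by enumerating the triangles and $4$-element bonds of $H_{1}$ and $H_{2}$ directly from the pictures in \Cref{fig17}. A triangle in $M(H_{i})$ is a triangle in the graph and a $4$-element cocircuit is a minimum $4$-edge cut. Checking the local adjacencies, one should locate in one graph a cyclic sequence of four triangles with the bowtie cocircuits between them, and in the other either a good augmented $4$-wheel or the ladder-compression configuration of \Cref{laddery}; the operation is then carried out by deleting (or deleting and contracting) the indicated edges. To confirm that what remains is $M(K_{3,3})$ it suffices to verify that the underlying graph, after deletion and any suppressed degree-$2$ vertices, is isomorphic to $K_{3,3}$; binary matroids on $9$ elements are determined by their ranks and cycle spaces, so no extra representability check is needed.

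For $(M^{*}(H_{3}),M^{*}(K_{3,3}))$ the structures must be found in the dual. The translation is that a triangle of $M^{*}(H_{3})$ is the star of a degree-$3$ vertex of $H_{3}$, a $4$-element cocircuit of $M^{*}(H_{3})$ is a $4$-cycle of $H_{3}$, and a bowtie in $M^{*}(H_{3})$ consists of two vertex-disjoint degree-$3$ vertex stars that together meet a $4$-cycle of $H_{3}$ in exactly four edges. I would locate the relevant dualised structure by inspecting $H_{3}$ in \Cref{fig17}, perform the indicated operation in $M^{*}(H_{3})$, and verify the resulting matroid is $M^{*}(K_{3,3})$, equivalently (by taking duals) that after contracting and deleting in $H_{3}$ we obtain a graph whose cycle matroid is $M(K_{3,3})$.

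The main obstacle will be in the case that an augmented $4$-wheel is used: one must verify the \emph{good} condition, namely that $M\ba e$ is $(4,4,S)$-connected, meaning every $3$-separation has a triangle, triad, or $4$-element fan on one side. Given that the relevant matroids have $12$ elements, this reduces to a finite case check over the $3$-separations of $M\ba e$, which can be carried out by inspection of the corresponding graph (or, for $M^{*}(H_{3})\ba e$, of the vertex cuts in $H_{3}$ after the corresponding edge contraction). Once the good condition is verified in each applicable case, the proposition follows by assembling the three individual verifications.
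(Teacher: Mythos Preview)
Your plan matches the paper's approach exactly: a case-by-case verification in which one exhibits the relevant structure on an explicit labelling and then checks the resulting minor. The paper's assignment is bowtie ring for $M(H_{1})$, good augmented $4$-wheel for $M(H_{2})$ (with central cocircuit the edges at the degree-$4$ vertex labelled~$6$), and ladder-compression for $M^{*}(H_{3})$; once you commit to those choices and supply the explicit edge labellings, your outline becomes the paper's proof.
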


\begin{proof}
Note that $M(H_1)$ has the bowtie ring shown in \Cref{btring_h1},
and trimming this ring yields $M(K_{3,3})$.
Also, $M(H_2)$ has a good augmented $4$\dash wheel
whose central cocircuit is the set of edges incident with vertex $6$.  
Deleting this cocircuit yields $M(K_{3,3})$.
Finally, $M^{*}(H_3)$ has the ladder segment shown
in \Cref{laddery}, where edges
$(16,12,01,07,03,23,34,47,45,25,56,67)$ correspond to
$(a_0,b_0,c_0,d_0,a_1,b_1,c_1,d_1,a_2,b_2,c_2,d_2)$.
If we delete $c_1$ and $c_2$, and contract
$d_1$ and $b_2$, then we obtain $M^{*}(K_{3,3})$.
\end{proof}

\begin{figure}[htb]
\centering
\includegraphics[scale=1.1]{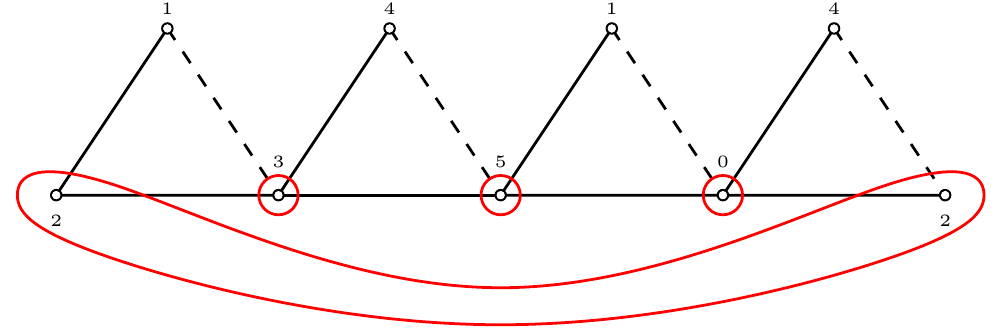}
\caption{Bowtie ring in $H_1$.}
\label{btring_h1}
\end{figure}

Observe that of all the pairs in statements (1), (2), and (3)
in \Cref{repeat} are either exceptional pairs that appear in
\Cref{goodmove}, or are dealt with by \Cref{prop2}.
Thus we have verified \Cref{goodmove} for these pairs.

The graphs $Q_{3}^{\times}$ and $Y_{9}$ are shown in \Cref{fig18},
along with \cml{10}.

\begin{figure}[htb]
\centering
\includegraphics[scale=1.1]{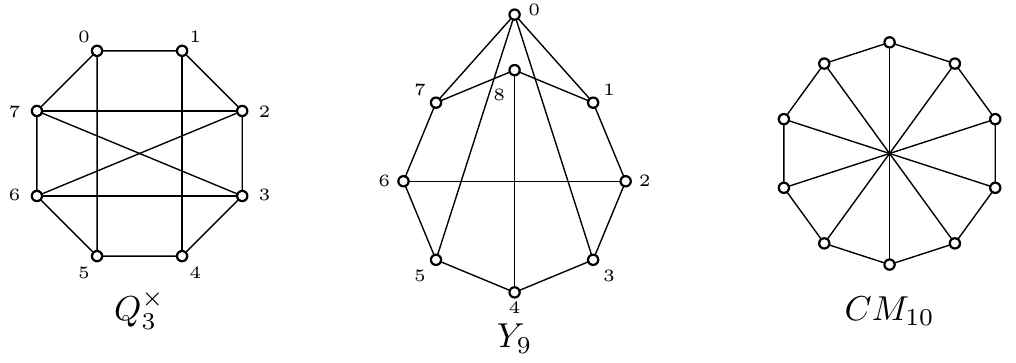}
\caption{Graphs $Q_{3}^{\times}$, $Y_{9}$, and \cml{10}.}
\label{fig18}
\end{figure}

\begin{proposition}
\label{prop3}
Let $(M,N)$ be one of the pairs
$(M^{*}(Q_{3}^{\times}),M^{*}(K_{5}))$,
$(M^{*}(Y_{9}),M^{*}(K_{5}))$,
$(M(\qml{7}),M(K_{5}))$, or
$(M^{*}(\cml{10}),M^{*}(K_{5}))$.
Then $N$ is obtained from $M$ by trimming a bowtie ring,
deleting the central cocircuit from a good augmented $4$\dash wheel,
or a ladder-compression move.
\end{proposition}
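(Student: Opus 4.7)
The plan is to follow the approach of Proposition~\ref{prop2}: for each of the four pairs, I will exhibit an explicit labelled structure in $M$ and then verify that the corresponding operation produces $N$. For the first three pairs we have $|E(M)|-|E(N)| = 14 - 10 = 4$, which matches the size of each of the three operations, while for $(M^{*}(\cml{10}),M^{*}(K_5))$ we have $|E(M)|-|E(N)| = 15 - 10 = 5$, so the operation must be trimming a ring of five bowties.

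For $(M(\qml{7}),M(K_5))$ the natural graph-theoretic reduction is $\qml{7}\to\qml{5}=K_5$, which collapses two vertices and removes two edges; this matches precisely the ``two contractions plus two deletions'' of a ladder-compression move. I would therefore label a three-rung segment of $\qml{7}$ to match \Cref{laddery} and verify that $M(\qml{7}) \ba c_1, c_2 / d_1, b_2 \cong M(K_5)$. For $(M^{*}(Q_{3}^{\times}),M^{*}(K_5))$ and $(M^{*}(Y_9),M^{*}(K_5))$ I would inspect $Q_{3}^{\times}$ and $Y_9$ for a ladder segment, a good augmented $4$-wheel, or a four-triangle bowtie ring, and then perform the corresponding operation; any of the three operations is consistent with the element counts.

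For $(M^{*}(\cml{10}),M^{*}(K_5))$, the triangles of $M^{*}(\cml{10})$ are the $3$-edge cuts of $\cml{10}$; since $\cml{10}$ is triangle-free and (one readily checks) cyclically $4$-edge-connected, these are exactly the ten vertex stars. I would take as the five triangles the stars of $v_0, v_6, v_2, v_8, v_4$ (iterating the shift $i\mapsto i+6\pmod{10}$), and as joining cocircuits the five $4$-cycles $v_i v_{i+1} v_{i+6} v_{i+5}$ in this sequence. A short computation then shows that the bowtie-ring axioms are satisfied and that the deleted elements $c_0,\ldots,c_4$ are the alternating cycle edges $\{e_{0,1},e_{2,3},e_{4,5},e_{6,7},e_{8,9}\}$; this set is a perfect matching of $\cml{10}$ whose contraction produces $K_5$, so trimming the ring in $M^{*}(\cml{10})$ yields $M^{*}(K_5)$.

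The hard part will be fixing the correct labelled structure in the three $14$-element cases, since without an explicit graph-theoretic cue it is not immediately clear which of the three operations fits each matroid. Once the structure is in place, the verification is a routine circuit/cocircuit check, and can, if desired, be cross-referenced with the Sage implementation used elsewhere in the project.
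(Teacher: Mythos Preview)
Your overall approach matches the paper's: exhibit an explicit structure in each $M$ and verify that the corresponding move yields $N$. Your treatment of $(M(\qml{7}),M(K_5))$ via a ladder-compression move is exactly what the paper does (it notes more generally that $M(\qml{n-2})$ is obtained from $M(\qml{n})$ by such a move). Your argument for $(M^{*}(\cml{10}),M^{*}(K_5))$ is correct and in fact more explicit than the paper's, which simply points to a figure displaying the five-triangle bowtie ring; your identification of the triangles as the stars of the even-indexed vertices, the cocircuits as the $4$-cycles $v_i v_{i+1} v_{i+6} v_{i+5}$, and the trimmed set as the alternating matching $\{e_{0,1},e_{2,3},e_{4,5},e_{6,7},e_{8,9}\}$ whose contraction in $\cml{10}$ yields $K_5$, is a clean self-contained verification.

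The genuine gap is in the two remaining $14$-element cases, where you explicitly defer the choice of operation (``I would inspect\dots''). This is not a proof but a promise, and you yourself flag it as ``the hard part''. The paper resolves these definitively: for $M^{*}(Q_3^{\times})$ it exhibits a \emph{good augmented $4$-wheel} and deletes its central cocircuit to obtain $M^{*}(K_5)$; for $M^{*}(Y_9)$ it exhibits a \emph{bowtie ring} (four triangles, so $k=3$) and trims it. To complete your proof you need to commit to and verify these specific structures. For $Y_9$, since the graph is cubic the triangles of $M^{*}(Y_9)$ are again vertex stars, and you can proceed much as you did for $\cml{10}$: find four independent vertices whose stars are linked cyclically by $4$-cycles of $Y_9$. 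For $Q_3^{\times}$ the bowtie-ring search will fail in the required form, and you should instead look for the augmented-$4$-wheel configuration of \Cref{a4w} in the bond matroid, then check that deleting the single element $e$ leaves a \ffsc\ matroid.
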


\begin{proof}
\Cref{q8times} shows a labelling of some of the edges in $Q_{3}^{\times}$,
along with a good augmented $4$\dash wheel in $M^{*}(Q_{3}^{\times})$.
Deleting the central cocircuit of this augmented wheel produces $M^{*}(K_{5})$.
\Cref{fig19} shows the labelling of a bowtie ring in $M^{*}(Y_{9})$.
Trimming this ring produces $M^{*}(K_{5})$.
Similarly, by trimming the bowtie ring shown in \Cref{fig20},
we can obtain $M^{*}(K_{5})$ from $M^{*}(\cml{10})$.
Finally, it is clear that $M(\qml{n-2})$ is obtained from $M(\qml{n})$ by a
ladder-compression move, so in particular this applies to
$M(\qml{7})$ and $M(\qml{5})\cong M(K_{5})$.
\end{proof}

\begin{figure}[htb]
\centering
\includegraphics[scale=1.1]{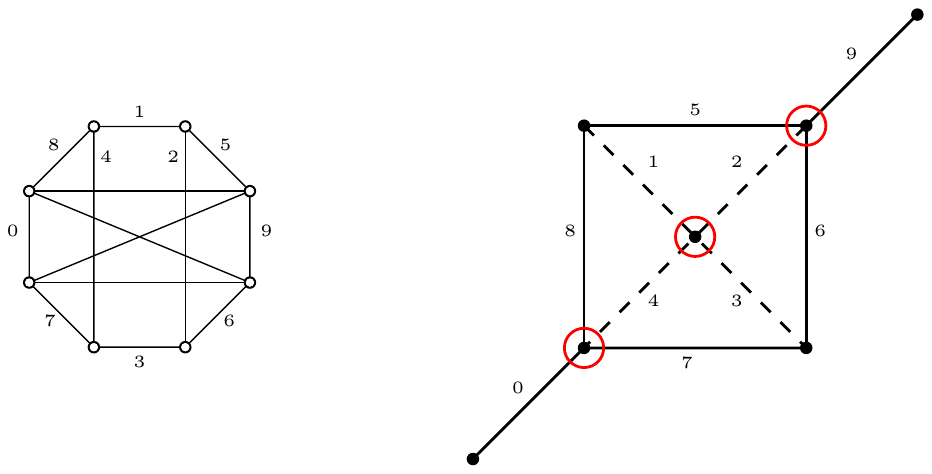}
\caption{$Q_3^\times$ and a good augmented $4$-wheel in $M^{*}(Q_{3}^{\times})$.}
\label{q8times}
\end{figure}

\begin{figure}[htb]
\centering
\includegraphics[scale=1.1]{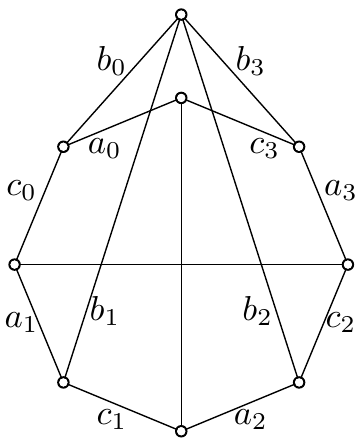}
\caption{A bowtie ring in $M^{*}(Y_{9})$.}
\label{fig19}
\end{figure}

\begin{figure}[htb]
\centering
\includegraphics[scale=1.1]{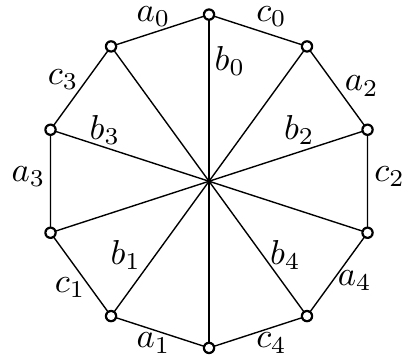}
\caption{A bowtie ring in $M^{*}(\cml{10})$.}
\label{fig20}
\end{figure}

Since \Cref{prop3} verifies \Cref{goodmove} for the pairs listed
in statement (4) of \Cref{repeat}, we now
move to non-graphic binary matroids.
We shall describe each of these matroids by giving a matrix
that is a reduced binary representation for it.
For example, \Cref{fig5} shows a matrix, $A$, which is a reduced
representation of $\Delta_{4}$.
\Cref{fig3} shows a geometric representation of $\Delta_{4}$.
Note that the element $9$ corresponds to $\gamma$,
so deleting $9$ produces a matroid isomorphic to
$M^{*}(\cml{6})\cong M^{*}(K_{3,3})$.

\begin{figure}[htb]
\centering
\includegraphics[scale=1.1]{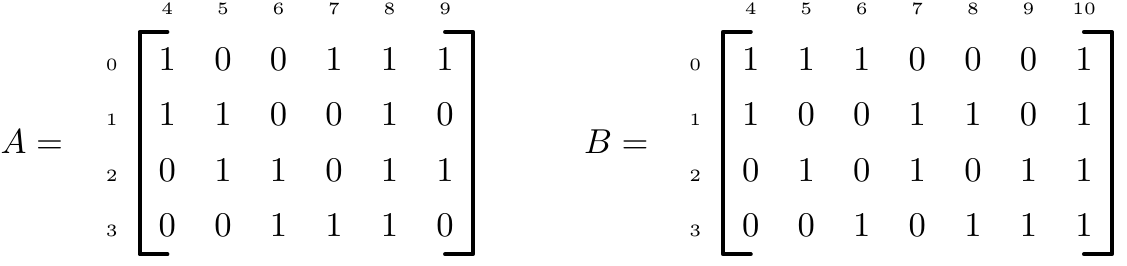}
\caption{Representations of $\Delta_{4}$ and $P$.}
\label{fig5}
\end{figure}

\begin{figure}[htb]
\centering
\includegraphics[scale=1.1]{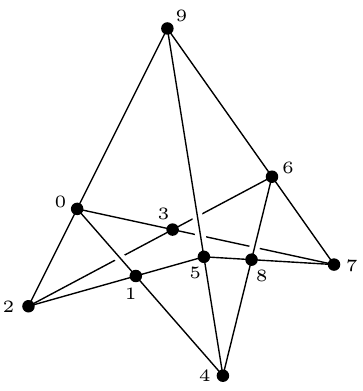}
\caption{A geometric representation of $\Delta_{4}$.}
\label{fig3}
\end{figure}

The matroids $A_{1}$, $A_{2}$, $A_{3}$, $A_{4}$, and $A_{5}$ have
as reduced representations the reduced matrices shown in \Cref{fig13}.
Thus each of $A_{1}$, $A_{2}$, $A_{3}$, $A_{4}$, and $A_{5}$
is a rank\dash $8$ binary matroid with $14$ elements, and each contains
a $4$\dash element independent set whose contraction produces a
minor isomorphic to $\Delta_{4}$.
The matroid $A_{6}$ is represented in \Cref{fig2}.
We can produce a $\Delta_{4}$\dash minor from $A_{6}$
by contracting a $3$\dash element independent set and deleting a single
element.

\begin{figure}[htb]
\centering
\includegraphics[scale=1.1]{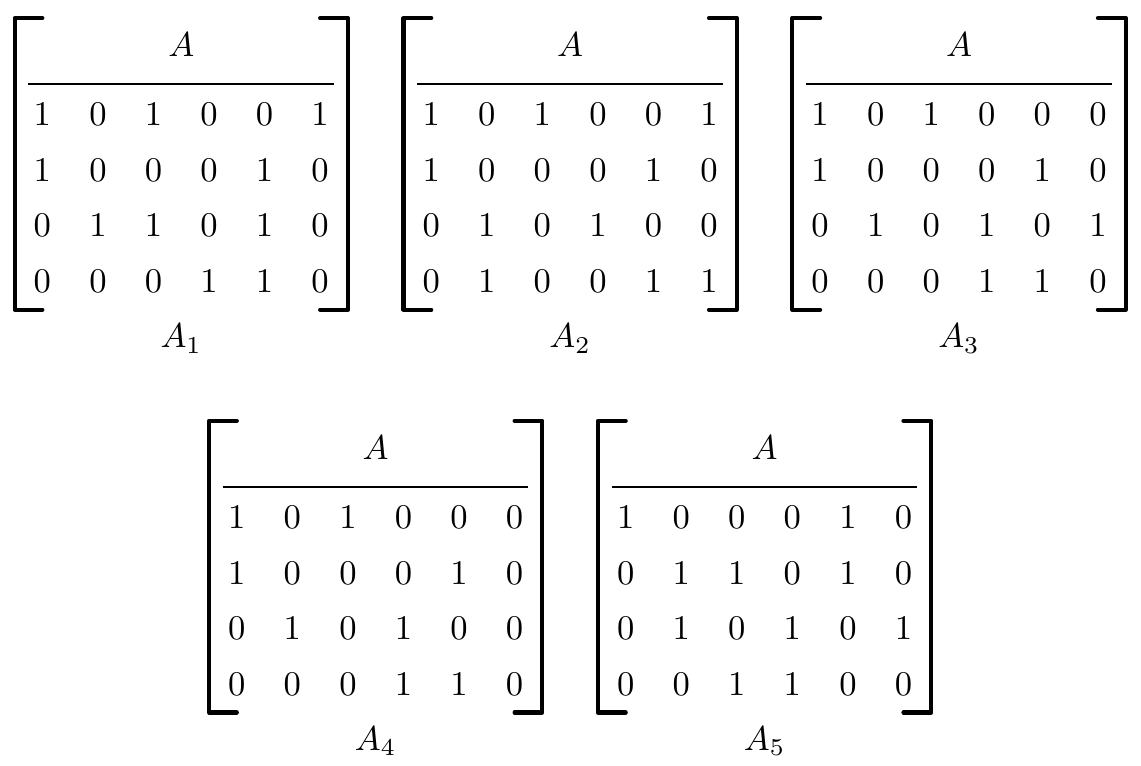}
\caption{Representations of $A_{1}$, $A_{2}$, $A_{3}$, $A_{4}$, and $A_{5}$.}
\label{fig13}
\end{figure}

\begin{figure}[htb]
\centering
\includegraphics[scale=1.1]{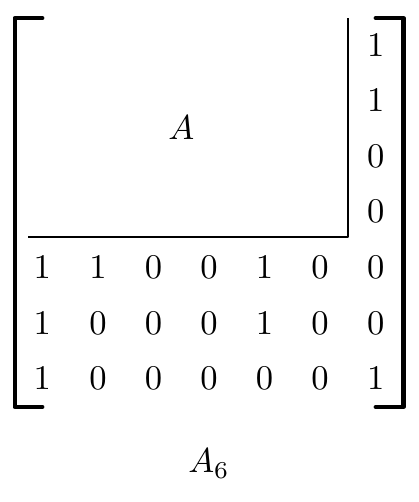}
\caption{A representation of $A_{6}$.}
\label{fig2}
\end{figure}

\begin{proposition}
\label{prop4}
Let $(M,N)$ be one of the pairs
$(A_{1}^{*},\Delta_{4}^{*})$,
$(A_{2}^{*},\Delta_{4}^{*})$,
$(A_{3}^{*},\Delta_{4}^{*})$,
$(A_{4}^{*},\Delta_{4}^{*})$,
$(A_{5}^{*},\Delta_{4}^{*})$, or
$(A_{6}^{*},\Delta_{4}^{*})$.
Then $N$ is obtained from $M$ by trimming a bowtie ring,
trimming an open rotor chain, or
deleting the central cocircuit from a good augmented $4$\dash wheel.
\end{proposition}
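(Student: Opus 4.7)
The plan is to imitate the proofs of \Cref{prop2} and \Cref{prop3}: for each of the six pairs I would locate, inside $A_i^*$, an explicit occurrence of a bowtie ring of length four, a good augmented $4$-wheel, or an open rotor chain with four dashed elements, record this witness in a labelled diagram, and then verify by direct matroid computation that the associated trimming move produces $\Delta_4^*$. Since $|E(A_i^*)|-|E(\Delta_4^*)|=14-10=4$, and each of the three admissible moves can remove exactly four elements (trimming four bowties deletes the four $c_i$, the central cocircuit of an augmented $4$-wheel always has four elements, and an open rotor chain with four dashed elements removes exactly four), the element counts match automatically.

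My concrete steps are as follows. First, for each $i$, I would compute the $3$- and $4$-element circuits and cocircuits of $A_i^*$ from the reduced binary representation of $A_i$ in \Cref{fig13} or \Cref{fig2}, using the standard duality relations. Second, I would scan this combinatorial data for a pattern matching \Cref{btringfig}, \Cref{a4w}, or \Cref{gmcdashed}. Third, I would draw a labelled diagram, analogous to \Cref{btring_h1}, \Cref{fig19}, or \Cref{fig20}, recording the identified structure. Fourth, I would confirm by a finite matrix computation that deleting the distinguished four elements yields a reduced representation equivalent to that of $\Delta_4^*$; equivalently, that contracting the corresponding four-element set in $A_i$ yields $\Delta_4$.

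The principal obstacle is that none of $A_1,\dots,A_6$ is graphic, so the required bowtie rings, augmented $4$-wheels, and open rotor chains cannot be identified by inspection of a graph drawing; each must be detected purely from the binary matroid's abstract circuit and cocircuit structure. A secondary subtlety concerns $A_6^*$: because $A_6$ has rank $7$ rather than $8$, the four elements removed from $A_6^*$ must drop the rank by one, equivalently the corresponding four-element contracted set in $A_6$ must be a rank-$3$ dependent set rather than the rank-$4$ independent set that suffices for $A_1,\dots,A_5$. This constrains which of the three admissible structures can possibly witness the move for $A_6^*$, and is the point at which I expect the bookkeeping to be most delicate. As with \Cref{prop2,prop3}, the identification of the relevant structures relies on the computer catalogue produced for the proof of \Cref{maintheorem}, and the resulting proof consists essentially of displaying the figures and citing the matrix verification.
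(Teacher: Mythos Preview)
Your proposal is correct and follows essentially the same approach as the paper: the paper exhibits explicit bowtie rings (with four triangles) in $A_1^*$, $A_2^*$, $A_3^*$, and $A_5^*$, an open rotor chain in $A_4^*$, and a good augmented $4$-wheel in $A_6^*$, in each case giving the explicit relabelling of elements and verifying that the move yields $\Delta_4^*$. Your observation about the rank discrepancy for $A_6^*$ is exactly right and explains why the augmented $4$-wheel (whose central cocircuit, being a cocircuit, drops the rank upon deletion) is the structure used there.
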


\begin{proof}
We will check that $\Delta_{4}^*$ is obtained from
each of $A_1^*,A_2^*,A_3^*$, and $A_5^*$ by trimming a bowtie
ring.
In \Cref{fig13}, assume that the matrices inherit the
labels on rows and columns from $A$, so that the first four
rows of any matrix are labelled $0$, $1$, $2$, $3$, the
columns are labelled $4$, $5$, $6$, $7$, $8$, $9$, and the
last four rows are labelled $10$, $11$, $12$, and $13$.
Now $A_{1}^{*}$ contains a bowtie ring, as in
\Cref{btringfig}, where $n=3$, and the labelling puts
\[(a_0,b_0,c_0,a_1,b_1,c_1,\dots ,a_3,b_3,c_3)=
(3,0,10,9,2,12,1,5,11,8,7,13).\]
Trimming this ring produces $\Delta_{4}^{*}$.
Similar statements apply to $A_2^*,A_3^*$, and $A_5^*$.
In those cases, the bowtie rings,
$(a_0,b_0,c_0,a_1,b_1,c_1,\dots ,a_3,b_3,c_3)$, are
\begin{linenomath}
\begin{multline*}
(4,8,11,5,7,12,0,3,10,2,6,13),\quad
(4,6,10,3,2,12,1,5,11,7,8,13),\\
\text{and}\quad (1,0,12,2,9,11,7,6,13,8,4,10).
\end{multline*}
\end{linenomath}

The matroid $A_4^*$ contains an open rotor chain,
as in \Cref{gmcdashed}, where $n=3$, and we label so that
\[(b_{0},c_0,a_1,b_1,c_1,a_{2},b_{2},c_{2},a_3,b_3,c_3)=
(2,10,3,6,13,4,8,11,7,5,12).\]
Trimming this rotor chain produces $\Delta_{4}^{*}$.

Finally, for $A_6$, we assume the matrix in \Cref{fig2} inherits
the labels from $A$, and we label the extra column $10$, and
the extra rows as $11,12$, and $13$.
Then $A_6^*$ contains an augmented $4$\dash wheel,
as in \Cref{a4w}, where we label so that
$(e,s,a_0,b_0,c_0,a_1,b_1,c_1,a_2,b_2)$
are replaced by $(1,0,13,10,4,11,12,5,8,7)$.
Now $A_6^*\ba 1$ is \ffsc, and
$A_{6}^{*}\ba 4,10,11,12\cong \Delta_{4}^{*}$, so the
proof of the \namecref{prop4} is complete.
\end{proof}

Before we continue, we recall some introductory material.
A simple rank\dash $r$ binary matroid, $M$, can be considered as a
subset, $E$, of points in the projective geometry $\mathrm{PG}(r-1,2)$.
The \emph{complement} of $M$ is the binary matroid corresponding to
the set of points of $\mathrm{PG}(r-1,2)$ not in $E$.
The complement of $M$ is well-defined by \cite[Proposition~10.1.7]{Oxl92},
meaning that it depends only on $M$, and not on the choice of $E$.
In particular, if two simple rank\dash $r$ binary matroids have isomorphic
complements, then they are themselves isomorphic.
The complement of $M^{*}(K_{3,3})$  in $\mathrm{PG}(3,2)$ is
$U_{2,3}\oplus U_{2,3}$, and the complement of $\Delta_{4}$ is
$U_{2,2}\oplus U_{2,3}$.
The complement of $M(K_{5})$ in $\mathrm{PG}(3,2)$ is 
$U_{4,5}$.
From this, it follows that $M(K_{5})$ has a unique simple
rank\dash $4$ binary extension on $11$ elements.
We denote this extension by $P$, so the complement
of $P$ is $U_{4,4}$.
The matrix $B$, shown in \Cref{fig5},
represents $P$ over $\mathrm{GF}(2)$.
Note that $P\ba 10$ is isomorphic to $M(K_{5})$, and that
$10$ is in triangles with $\{4,9\}$, $\{5,8\}$, and $\{6,7\}$,
where each of these pairs corresponds to a matching in $K_{5}$.
The matroids $B_{1}$, $B_{2}$, $B_{3}$, $B_{4}$, and $B_{5}$ are
represented by the matrices in \Cref{fig6}.  

\begin{figure}[htb]
\centering
\includegraphics[scale=1.1]{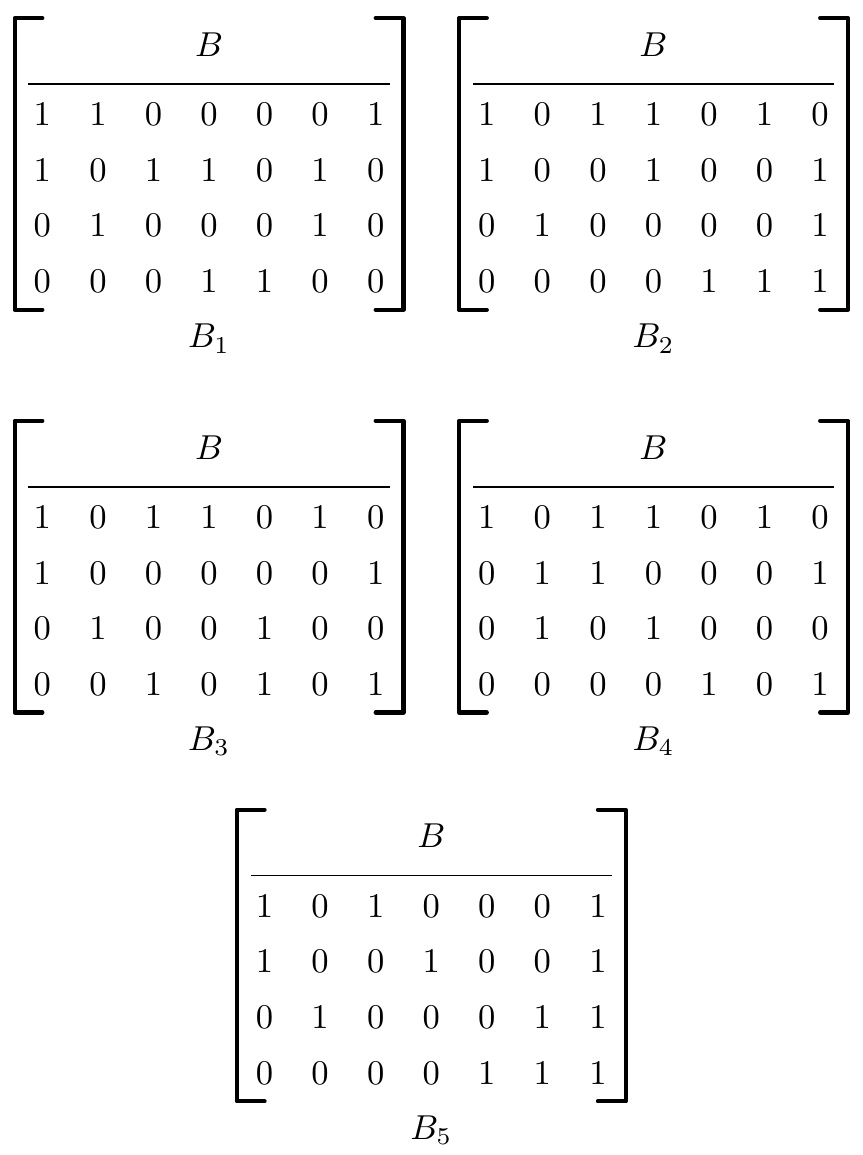}
\caption{Representations of $B_{1}$, $B_{2}$, $B_{3}$, $B_{4}$, and $B_{5}$.}
\label{fig6}
\end{figure}

\begin{proposition}
\label{prop5}
Let $(M,N)$ be one of the pairs
$(B_{1}^{*},P^{*})$,
$(B_{2}^{*},P^{*})$,
$(B_{3}^{*},P^{*})$,
$(B_{4}^{*},P^{*})$,
$(B_{5}^{*},P^{*})$.
Then $N$ is obtained from $M$ by trimming a bowtie ring.
\end{proposition}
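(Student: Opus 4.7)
The plan is to mirror the structure of the proof of \Cref{prop4}: for each $i \in \{1,2,3,4,5\}$, exhibit an explicit bowtie ring in $B_i^*$ whose trimming yields $P^*$. Since $B_i^*$ has rank $7$ and $15$ elements and $P^*$ has rank $7$ and $11$ elements, any such trimming must delete exactly $4$ elements. Recalling that trimming a ring $T_0,D_0,\ldots,T_k,D_k$ deletes $\{c_0,\ldots,c_k\}$, this forces $k+1 = 4$, i.e.\ a ring of four triangles and four $4$-element cocircuits, exactly matching the setting of \Cref{btringfig} with $n = 3$. This is consistent with the bowtie-ring instances already used in \Cref{prop4}, so the labelling and verification style there can be reused verbatim.

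First, I would fix the labelling convention: each matrix in \Cref{fig6} is a reduced representation of $B_i$ whose row labels are $0,1,\ldots,7$ (the basis indices) and whose column labels are $8,9,\ldots,14$ (the non-basis indices), so that the full ground set of $B_i$ (and hence of $B_i^*$) is $\{0,1,\ldots,14\}$. From this one reads off circuits of $B_i^*$ (i.e.\ cocircuits of $B_i$, which are the supports of row-sums of $B_i$ together with the corresponding row index, after taking the standard dual representation $[I\,|\,B_i^T]$) and cocircuits of $B_i^*$ (i.e.\ circuits of $B_i$, which are the supports of columns plus column index). In practice, then, the verification reduces to checking equalities of supports in the given matrix.

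For each $B_i^*$, I would then present an explicit $12$-tuple
\[
(a_0,b_0,c_0,a_1,b_1,c_1,a_2,b_2,c_2,a_3,b_3,c_3)
\]
of distinct elements of $\{0,1,\ldots,14\}$ and verify three things: that each $T_j = \{a_j,b_j,c_j\}$ is a triangle of $B_i^*$ (equivalently, a cocircuit of $B_i$, readable from a column of the matrix in \Cref{fig6}); that each $D_j = \{b_j,c_j,a_{j+1},b_{j+1}\}$ (indices mod $4$) is a cocircuit of $B_i^*$ (equivalently, a circuit of $B_i$); and finally that
\[
B_i^* \ba \{c_0,c_1,c_2,c_3\} \cong P^*,
\]
i.e.\ $B_i / \{c_0,c_1,c_2,c_3\} \cong P$. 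The isomorphism to $P$ can be certified by exhibiting, via row-reduction after contracting the four listed elements, the matrix $B$ in \Cref{fig5}, or, more conceptually, by observing that the contraction has rank $4$, $11$ elements, and contains $M(K_5)$ as a single-element deletion, which by the uniqueness of $P$ discussed after \Cref{prop4} identifies it with $P$.

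The main obstacle is the search for the bowtie rings themselves; once located, the verification is entirely routine linear algebra over $\mathrm{GF}(2)$. I expect that the five labellings can be found by the same exhaustive enumeration used to produce \Cref{maintheorem}, and, as in \Cref{prop4}, the final proof will simply list the five $12$-tuples and remark that in each case the triangle, cocircuit, and minor conditions are directly verified from the matrices in \Cref{fig6}.
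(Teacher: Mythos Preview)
Your proposal is correct and follows essentially the same approach as the paper: for each $B_i^*$ one exhibits an explicit bowtie ring with $n=3$ (a $12$-tuple $(a_0,b_0,c_0,\ldots,a_3,b_3,c_3)$) and checks that deleting $\{c_0,c_1,c_2,c_3\}$ yields $P^*$. One small correction: the labelling in the paper has each $B_i$ inherit the labels of $B$ (rows $0,1,2,3$, columns $4,\ldots,10$) with the four extra rows labelled $11,12,13,14$, so the basis of $B_i$ is $\{0,1,2,3,11,12,13,14\}$ rather than $\{0,\ldots,7\}$; also, the triangles of $B_i^*$ arising here are typically dependencies among \emph{rows} of the $B_i$ matrix (elements of the basis of $B_i$), not single columns.
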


\begin{proof}
We assume that each matrix, $B_i$, inherits the labels on $B$,
and that the extra rows are labelled $11$, $12$, $13$, and $14$.
In $B_{1}^{*}$, there is a bowtie ring, as in \Cref{btringfig}, with
$n=3$, where $(a_0,b_0,c_0,a_1,b_1,c_1,a_{2},b_{2},c_{2},a_3,b_3,c_3)$
is relabelled as $(1,3,12,0,6,11,5,9,13,7,8,14)$.
Similarly, for $B_2^*$, $B_3^*$, $B_4^*$, and $B_5^*$, the relevant
relabellings are $(1,8,12,10,5,13,2,0,11,6,3,14)$,
$(8,5,13,0,2,11,3,9,14,4,10,12)$,
$(10,8,14,3,1,11,0,4,12,7,5,13)$, and
$(8,1,12,7,2,13,5,0,11,6,3,14)$.
\end{proof}

Let $Q$ be the binary matroid represented by the matrix $C$, below.
Note that $Q$ is obtained by extending $\Delta_{4}$
by the element $10$ in such a way that $\{0,8,10\}$
is a triangle.
The complement of $Q$ in $\mathrm{PG}(3,2)$ is
$U_{1,1} \oplus U_{2,3}$.

\begin{center}
\includegraphics[scale=1.1]{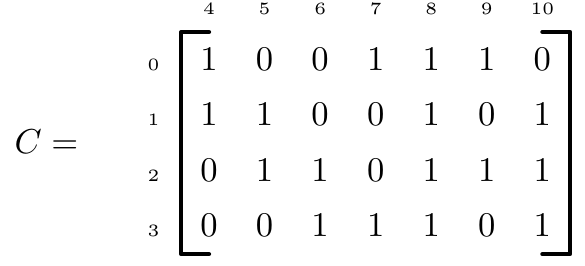}
\end{center}

The matroids $C_{1}$, $C_{2}$, $C_{3}$, and $C_{4}$ are represented by the
matrices in \Cref{fig7}.

\begin{figure}[htb]
\centering
\includegraphics[scale=1.1]{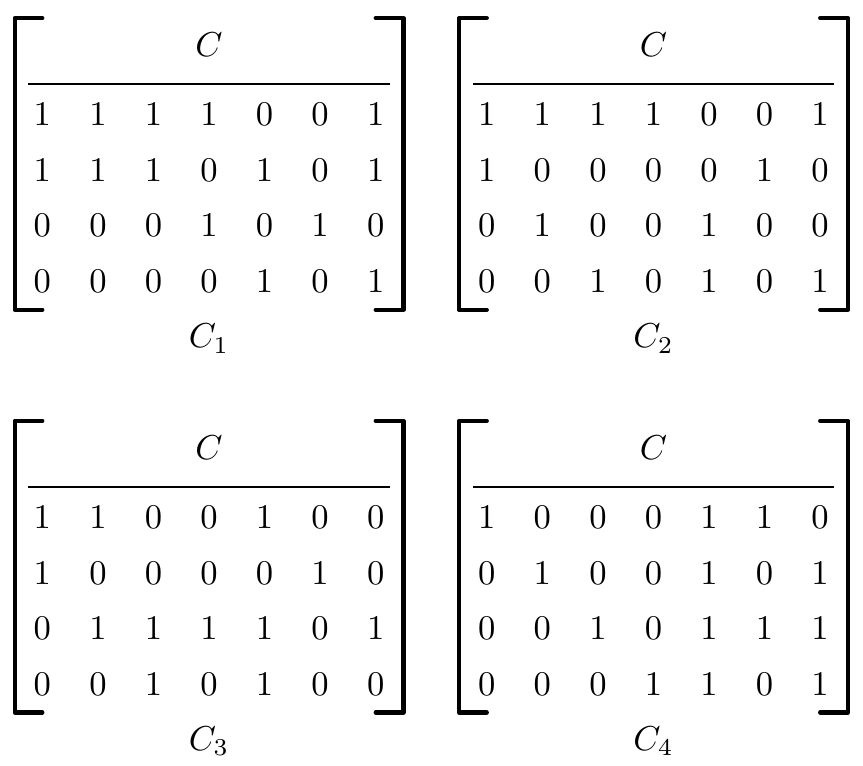}
\caption{Representations of $C_{1}$, $C_{2}$, $C_{3}$, and $C_{4}$.}
\label{fig7}
\end{figure}

\begin{proposition}
\label{prop6}
Let $(M,N)$ be one of the pairs
$(C_{1}^{*},Q^{*})$,
$(C_{2}^{*},Q^{*})$,
$(C_{3}^{*},Q^{*})$,
$(C_{4}^{*},Q^{*})$.
Then $N$ is obtained from $M$ by trimming a bowtie ring.
\end{proposition}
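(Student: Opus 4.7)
The plan is to follow verbatim the template established in \Cref{prop4,prop5}: for each of the four matroids $C_1^*,C_2^*,C_3^*,C_4^*$, exhibit an explicit bowtie ring as in \Cref{btringfig} with $k=3$, then observe that trimming it produces $Q^*$. First, I would fix labelling conventions consistent with the earlier propositions: the matrix $C$ representing $Q$ has its columns labelled $4,\dots,10$ and its first four rows labelled $0,1,2,3$; each $C_i$ inherits these labels, and the four additional rows are labelled $11,12,13,14$. Under this convention, an element of $E(C_i^*)$ is a triangle of $C_i^*$ exactly when it is a cocircuit of $C_i$ (a linear combination of rows of $[I\,|\,C_i]$ supported on exactly three columns), and a $4$\dash element cocircuit of $C_i^*$ exactly when it is a $4$\dash element circuit of $C_i$.

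Next, for each $i\in\{1,2,3,4\}$, I would record an explicit sequence
\[
(a_0,b_0,c_0,a_1,b_1,c_1,a_2,b_2,c_2,a_3,b_3,c_3)
\]
of twelve distinct elements of $E(C_i^*)$ such that the four triples $T_j=\{a_j,b_j,c_j\}$ are pairwise disjoint triangles of $C_i^*$, and the four quadruples $D_j=\{b_j,c_j,a_{j+1},b_{j+1}\}$ (indices modulo $4$) are cocircuits of $C_i^*$. These twelve labels would be found exactly as in \Cref{prop5}, by inspection of the matrices in \Cref{fig7} assisted by the sage computation referenced in \Cref{sect1}; producing them is a bookkeeping task rather than a conceptual one. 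Verification that the listed triples and quadruples really are circuits/cocircuits in each $C_i$ is then a routine linear-algebra check performed directly from the matrix entries.

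Having exhibited the bowtie ring, it remains to show that $C_i^*\ba\{c_0,c_1,c_2,c_3\}\cong Q^*$, equivalently that $C_i/\{c_0,c_1,c_2,c_3\}\cong Q$. Here I would exploit the complement description already used in the article: $Q$ is the unique simple rank\dash $4$ binary matroid on $11$ elements whose complement in $\mathrm{PG}(3,2)$ is $U_{1,1}\oplus U_{2,3}$. Each $C_i/\{c_0,c_1,c_2,c_3\}$ has rank $4$ and $11$ elements; provided one verifies it is simple, it suffices to compute its complement in $\mathrm{PG}(3,2)$ and confirm it is $U_{1,1}\oplus U_{2,3}$, which gives the isomorphism with $Q$ with no further ado.

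The main obstacle is purely combinatorial: locating, for each of the four matroids, a cyclically ordered $4$\dash tuple of disjoint triangles of $C_i^*$ whose interlocking $4$\dash element cocircuits exist and whose removal collapses the matroid onto $Q^*$. Conceptually there is nothing to prove beyond this, but the four labellings need to be correct simultaneously for the triangles to be disjoint, for the cocircuits $D_j$ to close up into a cycle, and for the resulting minor to match $Q^*$; as in \Cref{prop4,prop5}, this is most safely done by the computer search already described in \Cref{sect1}, after which the verification reduces to elementary matrix computation.
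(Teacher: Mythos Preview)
Your proposal is correct and follows essentially the same approach as the paper: fix the labelling convention for the matrices $C_i$, exhibit for each $C_i^{*}$ an explicit bowtie ring $(a_0,b_0,c_0,\ldots,a_3,b_3,c_3)$, and observe that trimming it yields $Q^{*}$. The paper's proof simply lists the four $12$\dash tuples outright and leaves all verification implicit; your additional suggestion to confirm the resulting minor is $Q$ via its complement $U_{1,1}\oplus U_{2,3}$ in $\mathrm{PG}(3,2)$ is a pleasant touch but not something the paper bothers with.
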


\begin{proof}
We assume that each matrix $C_i$ inherits the row and column labels
from $C$, and the extra rows are labelled $11$, $12$, $13$, and $14$.
We relabel the elements
$(a_0,b_0,c_0,a_1,b_1,c_1,a_{2},b_{2},c_{2},a_3,b_3,c_3)$ in \Cref{btringfig}
$(1,6,12,7,9,13,2,0,11,8,10,14)$ for $C_{1}^{*}$,
$(4,9,12,2,0,11,3,7,14,8,5,13)$ for $C_{2}^{*}$,
$(9,4,12,8,6,14,1,10,11,3,5,13)$ for $C_{3}^{*}$,
and $(7,0,11,4,1,12,5,2,13,6,3,14)$ for $C_{4}^{*}$.
\end{proof}

\Cref{prop4,prop5,prop6}
verify \Cref{goodmove}
for the pairs listed in statements
(5), (6), and (7) in \Cref{repeat}.
There are two matrices in \Cref{fig8}.
The matrix $D$ represents the binary matroid $R$.
Note that $R$ is obtained from $M(K_{5})$ by coextending by
the element $10$ so that $10$ is in a triad with two elements
that correspond to a $2$\dash edge matching in $K_{5}$.
Therefore $R$ is isomorphic to the matroid
obtained from $P$ by performing a $\Delta\text{-}Y$\dash operation
on the triangle $\{4,9,10\}$.

\begin{figure}[htb]
\centering
\includegraphics[scale=1.1]{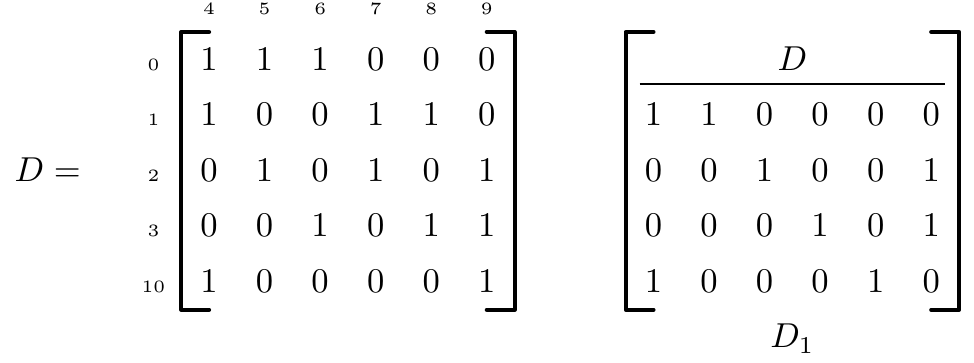}
\caption{Representations of $R$ and $D_{1}$.}
\label{fig8}
\end{figure}

\begin{proposition}
\label{prop7}
$R^{*}$ can be obtained from $D_{1}^{*}$ by trimming
a bowtie ring.
\end{proposition}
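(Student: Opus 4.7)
The plan is to follow the template established by \Cref{prop4,prop5,prop6}: exhibit an explicit bowtie ring in $D_{1}^{*}$ and then verify that trimming it produces $R^{*}$. Concretely, I would adopt the labelling convention that the matrix $D_{1}$ inherits the row and column labels of $D$ (so the first five rows are $0,1,2,3,4$ and the columns are $5,6,7,8,9,10$), and the four extra rows of $D_{1}$ are labelled $11$, $12$, $13$, and $14$. The goal is then to produce a $12$\dash tuple $(a_0,b_0,c_0,a_1,b_1,c_1,a_2,b_2,c_2,a_3,b_3,c_3)$ of distinct elements of $E(D_{1}^{*})$ that realises the bowtie-ring structure of \Cref{btringfig} with $n=3$.

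To locate such a tuple, I would first compute the triangles and $4$\dash element cocircuits of $D_{1}^{*}$ directly from the matrix in \Cref{fig8} (equivalently, the circuits and $4$\dash element cocircuits of $D_{1}$ by duality). I would then search for a cyclic sequence $T_0,D_0,T_1,D_1,T_2,D_2,T_3,D_3$ of pairwise disjoint triangles $T_i=\{a_i,b_i,c_i\}$ with each $D_i=\{b_i,c_i,a_{i+1},b_{i+1}\}$ (indices mod $4$) appearing in the cocircuit list. Since $R^{*}$ has $11$ elements and $D_{1}^{*}$ has $15$, the four elements $c_0,c_1,c_2,c_3$ to be deleted are forced by the known isomorphism $D_{1}^{*}\ba\{c_0,c_1,c_2,c_3\}\cong R^{*}$, so the enumeration is guided by fixing a candidate $4$\dash element deletion set $X\subseteq E(D_{1}^{*})$ with $D_{1}^{*}\ba X\cong R^{*}$ and then checking that the elements of $X$ complete disjoint triangles and cocircuits in the required pattern.

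Once a suitable tuple is identified, the proof itself is a one-line verification: that the specified triples are circuits of $D_{1}^{*}$, that the specified quadruples are cocircuits of $D_{1}^{*}$, and that the resulting deletion is $R^{*}$. These checks are immediate from the matrix representations in \Cref{fig8}, and so the written proof will consist of a single displayed labelling, exactly in the style of the proofs of \Cref{prop4,prop5,prop6}, followed by the statement that trimming the ring yields $R^{*}$.

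The main obstacle is purely the computational search: the bowtie ring is not canonical, and there is no structural shortcut visible from the matrix alone. I would therefore carry out the search using the sage matroid code referenced in the introduction. Once the labelling is in hand, the verification reduces to inspecting rows and columns of the given matrix, and no further difficulty arises.
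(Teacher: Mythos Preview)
Your proposal is correct and follows exactly the same approach as the paper: the paper's proof also labels the four extra rows of $D_{1}$ as $11,12,13,14$, then simply records the specific bowtie-ring tuple $(8,3,12,6,0,11,5,2,13,7,1,14)$ and asserts that trimming it yields $R^{*}$. Your plan to locate such a tuple computationally and present it as a one-line verification is precisely what the paper does.
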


\begin{proof}
Label the extra rows in $D_1$ that are not in $D$ as
$11$, $12$, $13$, and $14$.  
Then $(8,3,12,6,0,11,5,2,13,7,1,14)$ is the appropriate
bowtie ring.
\end{proof}

The matroid $S$ is represented by the matrix $E$, and $E_{1}$
is represented by the matrix shown in \Cref{fig10}.
We can obtain $S$ from $\Delta_{4}$ by coextending by
the element $10$ so that it is in a triad with $0$ and $8$.
Thus $S$ can also be obtained from $Q$ by 
a $\Delta\text{-}Y$\dash operation.

\begin{figure}[htb]
\centering
\includegraphics[scale=1.1]{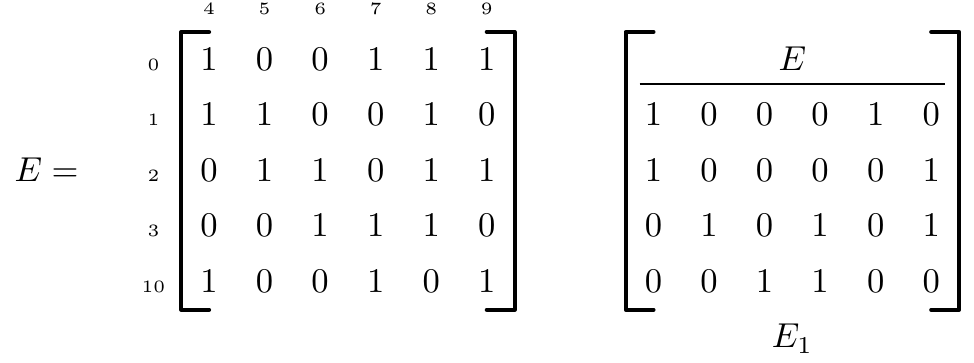}
\caption{Representations of $S$ and $E_{1}$.}
\label{fig10}
\end{figure}

\begin{proposition}
\label{prop8}
$S^{*}$ can be obtained from $E_{1}^{*}$ by trimming
a bowtie ring.
\end{proposition}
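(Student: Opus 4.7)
My plan is to mirror the template used in \Cref{prop5,prop6,prop7}: since each of those propositions was established by simply exhibiting an explicit bowtie ring in the relevant matroid and appealing to the fact that trimming it yields the target, the same strategy should work here. First I would fix labels on the matrix $E_1$ shown in \Cref{fig10}, inheriting the row and column labels from $E$ and labelling the four new rows $11,12,13,14$. Then I would produce a sequence
\[(a_0,b_0,c_0,a_1,b_1,c_1,a_2,b_2,c_2,a_3,b_3,c_3)\]
of twelve distinct elements of $E_1^*$ playing the roles of the ring displayed in \Cref{btringfig} with $k=3$.

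To verify that such a sequence really does give a bowtie ring, I would check two families of conditions: (i) each of $T_i=\{a_i,b_i,c_i\}$ for $i\in\{0,1,2,3\}$ is a triangle of $E_1^*$ (equivalently, a cocircuit of $E_1$), and (ii) each of the four $4$-element sets $D_0=\{b_0,c_0,a_1,b_1\}$, $D_1=\{b_1,c_1,a_2,b_2\}$, $D_2=\{b_2,c_2,a_3,b_3\}$, and $D_3=\{b_3,c_3,a_0,b_0\}$ is a cocircuit of $E_1^*$ (equivalently, a circuit of $E_1$). Both checks reduce to linear-algebra calculations over $\mathrm{GF}(2)$ on the rows and columns of $E_1$, which are entirely routine once the labels are pinned down. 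Finally, having verified the ring, I would compute $E_1^*\ba\{c_0,c_1,c_2,c_3\}$ and confirm that it is isomorphic to $S^*$, either by matching representative matrices up to elementary row operations and column permutations, or by appealing to the sage computation referenced in \Cref{sect1}.

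The only real obstacle is the search itself: locating a valid sequence of twelve elements forming a bowtie ring in $E_1^*$ is a combinatorial search with many candidates, and there is no particular a priori reason to prefer one labelling over another. However, since \Cref{goodmove} is known to apply to this pair and the pattern of the preceding propositions shows that a bowtie ring does exist, the search is guaranteed to terminate, and in practice it is the sort of task that the authors' sage code is designed to do exhaustively. Once a candidate sequence is produced, the verification described above is mechanical, so the entire proof will collapse to a single displayed tuple together with the statement that trimming the corresponding ring produces $S^*$, exactly in the style of \Cref{prop7}.
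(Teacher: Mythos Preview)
Your approach is exactly the one the paper takes: label the four extra rows of $E_1$ as $11,12,13,14$, exhibit an explicit bowtie ring with $k=3$, and observe that trimming it yields $S^*$. The only thing missing from your proposal is the actual tuple; the paper supplies $(a_0,b_0,c_0,\ldots,a_3,b_3,c_3)=(1,5,11,4,9,12,7,6,14,3,2,13)$ and leaves the routine verifications implicit, so once you carry out the search you describe, your proof will be identical in form to the paper's.
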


\begin{proof}
Label the extra rows in $E_1$ that are not in $E$
as $11$, $12$, $13$, and $14$.
Then $(1,5,11,4,9,12,7,6,14,3,2,13)$ is the appropriate
bowtie ring.
\end{proof}

Recall that the M\"{o}bius matroids are defined in \Cref{sect1}.

\begin{proposition}
\label{prop1}
When $r\geq 6$ is an even integer, the matroid $\Upsilon_{r}^{*}$
can be obtained from $\Upsilon_{r+2}^{*}$ by a ladder-compression move.
\end{proposition}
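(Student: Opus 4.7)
The plan is to locate the twelve-element configuration of \Cref{laddery} inside $\Upsilon_{r+2}^{*}$, carry out the ladder-compression move on it, and verify that the resulting minor is $\Upsilon_{r}^{*}$. Since $r$ is even, $r+1$ is odd, so by definition $\Upsilon_{r+2}^{*}\cong M_{r+1}$, the single-element extension of $M(\mathcal{W}_{r+1})$ by $\gamma$ with $\{s_{1},\dots,s_{r+1},\gamma\}$ a circuit.

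First I would set up the standard reduced binary representation of $M_{r+1}$: indexing rows by rim vertices of $\mathcal{W}_{r+1}$, take $s_{i}=e_{i}$ for $1\le i\le r+1$, $r_{i}=e_{i}+e_{i+1}$ (cyclically), and $\gamma=\sum_{i=1}^{r+1}e_{i}$. In this description each wheel-triangle $T_{i}=\{s_{i},r_{i},s_{i+1}\}$ is a triangle of $M_{r+1}$, and the complementary hyperplane of the rim-vertex triad $\{s_{i},r_{i-1},r_{i}\}$ of $M(\mathcal{W}_{r+1})$ is $\{x:x_{i}=0\}$. Since $\gamma$ has every coordinate equal to $1$, $\gamma$ lies outside each such hyperplane, so each rim-vertex triad of the wheel is lifted to a $4$-cocircuit $\{s_{i},r_{i-1},r_{i},\gamma\}$ of $M_{r+1}$. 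Taking three consecutive triangles $T_{1},T_{2},T_{3}$ together with the two linking rim-vertex $4$-cocircuits then realises the triangle--cocircuit pattern of \Cref{laddery}, with $\gamma$ in the role of the shared internal element common to both cocircuits.

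Next I would label the twelve elements by $(a_{j},b_{j},c_{j},d_{j})$ for $0\le j\le 2$ to match \Cref{laddery} and then perform the move $M_{r+1}\ba c_{1},c_{2}/d_{1},b_{2}$. Interpreted on the wheel, this deletes two consecutive middle rim edges and contracts two internal spoke-type elements, merging three consecutive rim vertices into one and collapsing $\mathcal{W}_{r+1}$ to $\mathcal{W}_{r-1}$. The element $\gamma$ survives, and the identifications forced by the contractions send the defining circuit $\{s_{1},\dots,s_{r+1},\gamma\}$ to the circuit $\{s'_{1},\dots,s'_{r-1},\gamma\}$ in the quotient, where $s'_{1},\dots,s'_{r-1}$ are the spokes of $\mathcal{W}_{r-1}$. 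That is the defining circuit of $M_{r-1}$, so the minor produced by the move is isomorphic to $M_{r-1}\cong\Upsilon_{r}^{*}$.

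The main obstacle is pinning down the exact labelling $(a_{j},b_{j},c_{j},d_{j})$ so that every triangle, circuit, and cocircuit of \Cref{laddery} is realised in $M_{r+1}$ without extraneous dependencies interfering, and checking directly from the representation that no unexpected minor structure is produced by the move. The parity hypothesis $r$ even is built into the very definitions of $\Upsilon_{r+2}$ and $\Upsilon_{r}$; the bound $r\ge 6$ is needed so that $\mathcal{W}_{r+1}$ has at least seven rim vertices, letting three consecutive triangles be chosen disjointly, and so that the contracted wheel $\mathcal{W}_{r-1}$ is still a genuine wheel of rank at least $5$. Once the labelling is verified, the move and the isomorphism with $M_{r-1}$ are direct computations in the binary representation.
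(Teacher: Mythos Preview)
Your overall strategy is the right one, and your computation that the rim-vertex triads of $M(\mathcal{W}_{r+1})$ become $4$-cocircuits $\{r_{i-1},s_{i},r_{i},\gamma\}$ in $M_{r+1}$ is correct. The gap is in the identification of the ladder segment. The configuration of \Cref{laddery} has \emph{twelve} distinct elements $a_{0},b_{0},c_{0},d_{0},\ldots,a_{2},b_{2},c_{2},d_{2}$; three consecutive wheel triangles together with $\gamma$ give you only eight, so what you describe cannot be the required structure. In particular, $\gamma$ is not one of the twelve ladder elements at all, so it cannot play ``the role of the shared internal element common to both cocircuits''. The $4$-cocircuits that actually sit inside the ladder are obtained by taking symmetric differences of two adjacent rim-vertex cocircuits, which cancels $\gamma$: for instance $\{r_{i-1},s_{i},r_{i},\gamma\}\triangle\{r_{i},s_{i+1},r_{i+1},\gamma\}=\{r_{i-1},s_{i},s_{i+1},r_{i+1}\}$. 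These $\gamma$-free $4$-cocircuits, together with six consecutive wheel triangles, give twelve spoke/rim elements realising \Cref{laddery}; the hypothesis $r\ge 6$ is exactly what guarantees these twelve are distinct.

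A second, smaller slip: once the labelling is fixed, the move $\backslash c_{1},c_{2}/d_{1},b_{2}$ deletes two \emph{spokes} and contracts two \emph{rim edges}, not the other way round as you wrote. With the correct twelve-element ladder in hand, the verification that $M_{r+1}\backslash c_{1},c_{2}/d_{1},b_{2}\cong M_{r-1}$ goes through along the lines you sketched (one checks that the spanning circuit $\{s_{1},\ldots,s_{r+1},\gamma\}$ descends to the analogous circuit of $M_{r-1}$), so the remaining work is really just getting the ladder labelling right.
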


\begin{proof}
Recall that $\Upsilon_{r+2}^{*}=M_{r+1}$ and $\Upsilon_{r}^{*}=M_{r-1}$,
where $M_{k}$ is an extension of the rank\dash $k$ wheel by the element
$\gamma$.
Assume that the spokes of $M({\mathcal W}_{r+1})$, in cyclic order, are
$x_0,x_1,\ldots,x_{r}$ and that $\{x_i,y_i,x_{i+1}\}$ is a triangle of
$M({\mathcal W}_{r+1})$ for $i=0,1,\ldots, r$.
(We interpret subscripts modulo $r+1$.)
Then, for $i=0,1,\ldots, r$, the set
$\{y_{i},x_{i+1},y_{i+1},\gamma\}$ is a cocircuit of $M_{r+1}$.
We obtain $M_{r-1}$ from $M_{r+1}$
by contracting $y_{r-1}$ and $y_{r}$,
and deleting $x_{r-1}$ and $x_{0}$, and then relabelling $x_{r}$
as $x_{0}$.
To see this, observe that $M_{r+1}$ has
$\{x_{0},\ldots, x_{r},\gamma\}$
and $\{x_{r-1},x_{r},y_{r-1}\}$ as circuits, so their symmetric difference,
$C=\{x_{0},\ldots, x_{r-2}, y_{r-1},\gamma\}$, is a disjoint union of circuits.
Orthogonality with the cocircuits containing $\gamma$ implies that $C$ is a
circuit of $M_{r+1}$.
Next we note that $\{x_{r-1},x_{r},y_{r-2},y_{r}\}$ is the
symmetric difference of $\{y_{r-2},x_{r-1},y_{r-1},\gamma\}$
and $\{y_{r-1},x_{r},y_{r},\gamma\}$, and is therefore
a disjoint union of cocircuits.
This implies that $y_{r}$ is not in the closure of $C$ in $M_{r+1}$.
Therefore $C-y_{r-1}=\{x_{0},\ldots, x_{r-2},\gamma\}$ is a spanning
circuit of $M_{r+1}/y_{r-1},y_{r}\ba x_{r-1},x_{0}$, and it follows
easily that this matroid is $M_{r-1}$, up to relabelling.

Now we need only show that this operation is a ladder-compression move. 
We note that $M_{r+1}$ contains a ladder segment, as depicted in
\Cref{laddery}, where the labels
$a_0$, $b_0$, $c_0$, $d_0$, $a_1$, $b_1$, $c_1$,
$d_1$, $a_2$, $b_2$, $c_2$, and $d_2$
are replaced by
$x_{r-4}$, $y_{r-4}$, $x_{r-3}$, $y_{r-3}$, $x_{r-2}$,  $y_{r-2}$,
$x_{r-1}$, $y_{r-1}$, $x_{r}$, $y_{r}$, $x_0$, and $y_0$, respectively.
Because $r\geq 6$, these elements are all distinct.
\end{proof}

\Cref{prop1} now implies that $\Upsilon_{6}^{*}$ can be obtained
from $\Upsilon_{8}^{*}$ by a ladder-compression move.
Thus we have completed the proof of \Cref{goodmovefirst}.

\begin{proof}[Proof of \Cref{cor2}.]
If $(M,N)$ is $(M(\qml{7}),M(K_{4}))$, then we can set
$M_{0}$ to be $M(\qml{5})\cong M(K_{5})$, and $M_{0}$ can be
obtained from $M$ by a ladder-compression move.
If $(M,N)$ is $(\Upsilon_{8},F_{7})$ or $(\Upsilon_{8}^{*},F_{7})$,
then we can set $M_{0}$ to be $\Upsilon_{6}$ or $\Upsilon_{6}^{*}$,
respectively.
In either case, by \Cref{prop1}, we can use a ladder-compression
move to obtain $M_{0}^{*}$ from $M^{*}$ (in the first case), or
$M_{0}$ from $M$ (in the second).
\end{proof}

\section{Proof of the main results}

We prove
\Cref{maintheorem}.
Assume that $(M,N)$ is a fascinating pair that
contradicts the statement of the \namecref{maintheorem}.

\addtocounter{theorem}{1}

\begin{sublemma}
\label{sub1}
$|E(N)|\in\{10,11\}$.
\end{sublemma}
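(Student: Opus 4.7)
The plan is to derive upper and lower bounds on $|E(N)|$ from the definitions and the hypothesis $|E(M)|\le 15$, and then to eliminate the remaining small values of $|E(N)|$ by enumerating all fascinating pairs whose smaller member has that size.

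For the bounds, $|E(N)|\ge 6$ is part of the definition of an interesting pair. Combining $|E(M)|\le 15$ with the condition $|E(M)|-|E(N)|>3$ yields $|E(N)|\le 11$. Thus $|E(N)|\in\{6,7,8,9,10,11\}$, and it remains to eliminate $|E(N)|\in\{6,7,8,9\}$.

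I would handle each small value in turn, appealing to the catalogue of \ifc\ binary matroids generated by the computer search referenced in \Cref{sect1}. For $|E(N)|=6$, the only \ifc\ binary matroid is $M(K_4)$, which is self-dual, and direct enumeration shows that the only fascinating pairs with $|E(M)|\le 15$ are $(M(Q_3),M(K_4))$ and $(M(K_5),M(K_4))$, both appearing in item (1). For $|E(N)|=7$, the only \ifc\ binary matroids are $F_7$ and $\Upsilon_4\cong F_7^*$, giving rise to the pairs $(\Upsilon_6,F_7)$ and $(\Upsilon_6^*,F_7)$ in item (2). For $|E(N)|=8$, the search finds no fascinating pair with $|E(M)|\le 15$. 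For $|E(N)|=9$, the relevant matroid up to duality is $M(K_{3,3})\cong M(\cml{6})$, which is self-dual, and the fascinating pairs are precisely those listed in item (3). In every case, $(M,N)$ already appears in \Cref{maintheorem}, contradicting the choice of $(M,N)$ as a counterexample.

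The main obstacle is confirming exhaustiveness for $|E(N)|\in\{8,9\}$: one must be certain that the classification of \ifc\ binary matroids on $8$ or $9$ elements is complete, and that for each candidate $N$ the enumeration of fascinating pairs with $|E(M)|\le 15$ is also complete. This is handled by the same exhaustive computer search and catalogue of small $3$\dash connected binary matroids described in the introduction; no structural argument beyond the definitional size bounds is required to establish the sublemma itself.
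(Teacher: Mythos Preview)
Your overall strategy is sound but differs from the paper's in a key respect. The paper does \emph{not} invoke the exhaustive search to dispose of $|E(N)|\in\{6,7,8,9\}$; it uses structural theorems from the literature instead. For $|E(N)|=6$ (so $N\cong M(K_4)$), the point is that every \ifc\ binary matroid on at least four elements already has an $M(K_4)$\dash minor, so the fascinating-pair condition collapses to ``$M$ has no \ifc\ proper minor within three elements''; the chain theorem of \cite{CMO11} then forces $M$ to be the cycle matroid of a quartic ladder or its dual, leaving only $M(K_5)$ and $M(Q_3)$ up to duality. For $|E(N)|\in\{7,8,9\}$ the paper cites \cite[Lemma~2.1]{GZ06} for the list of \ifc\ binary matroids of these sizes (there are none on eight elements), and then invokes \cite[Corollary~1.2]{Zho04} when $N\cong F_7$ and \cite[Lemma~2.3]{GZ06} when $N\cong M(K_{3,3})$. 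Only after \ref{sub1} is established by these arguments does the computer search begin, restricted to the $24$ targets on $10$ or $11$ elements.

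Your purely computational route would also work, but note that the paper's main search is not the search you would need: it is seeded only with targets on $10$ or $11$ elements, precisely because \ref{sub1} has already been proved. You would have to describe and carry out an extended enumeration over targets with $6\le |E(N)|\le 9$. The paper's approach buys a cleaner division of labour and connects the small cases to existing splitter-type results; yours would be more uniform but demands more computation. One small slip: $M(K_{3,3})$ is not self-dual (it has rank $5$ and corank $4$); the two \ifc\ binary matroids on nine elements are $M(K_{3,3})$ and $M^{*}(K_{3,3})$, which is why the argument is phrased up to duality.
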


Certainly $|E(N)|\leq 11$, since $|E(M)|\leq 15$, and
$(M,N)$ is a fascinating pair, so $|E(M)|-|E(N)|>3$.
Assume that $|E(N)|<10$.
First consider the case that $|E(N)|=6$, so that
$N$ is isomorphic to $M(K_{4})$.
If $M$ has a proper minor, $M'$, such that
$|E(M)|-|E(M')|\leq 3$, and $M'$ is \ifc,
then $M'$ has an $M(K_{4})$\dash minor
\cite[Corollary~12.2.13]{Oxl11},
and hence $(M,N)$ is not a fascinating pair.
Therefore $M$ has no such minor, so we
can apply our chain theorem \cite[Theorem~1.3]{CMO11}.
Since $|E(M)|\leq 15$, it follows from that theorem
that $M$ is the cycle matroid of a planar or M\"{o}bius quartic
ladder, or the dual of such a matroid.
The only planar quartic ladder with fewer than $16$
edges is the octahedron, $O$, which is the dual graph of
$Q_{3}$, the cube.
The only M\"obius quartic ladders with fewer than $16$ edges have
$14$ or $10$ edges.
The former has the latter as a minor, and the latter is 
isomorphic to $K_{5}$.
From this we deduce that, up to duality,
$(M,N)$ is $(M(Q_{3}),M(K_{4}))$ or $(M(K_{5}),M(K_{4}))$,
and that therefore $(M,N)$ is not a counterexample
after all.
Hence $6<|E(N)|<10$.
The only \ifc\ binary matroids satisfying this constraint are
$F_{7}$, $M(K_{3,3})$, and their duals.
(This fact is \cite[Lemma~2.1]{GZ06}, and will also be confirmed
by the subsequent exhaustive search.)
Thus we can assume $N$ is $F_{7}$ or $M(K_{3,3})$.

From this point, we use almost exactly the same arguments as in
\cite[Lemma~2.3]{CMO13}.
Assume $N$ is $F_{7}$, so $|E(M)|\geq 11$.
We can use \cite[Corollary~1.2]{Zho04} to deduce that
$M$ is isomorphic to
$T_{12}\ba e\cong \Upsilon_{6}$ or $T_{12}/e\cong \Upsilon_{6}^{*}$,
so $(M,N)$ fails to contradict the theorem.
Therefore we assume $N$ is $M(K_{3,3})$,
and hence $|E(M)|\geq 13$.
Now we can use \cite[Lemma~2.3]{GZ06}.
This lemma defines five graphs, but only four of them
have at least $13$ edges.
Therefore we can deduce that $M$ is isomorphic to one of
the graphic matroids $M(H_{1})$, $M(H_{2})$, $M(H_{3})$,
or $M(\qml{7})$.
Again this is a contradiction, as it implies that
$(M,N)$ is not a counterexample, so the proof of
\Cref{sub1} is complete.

At this point, it is appropriate to verify that the pairs
mentioned in the proof of \Cref{sub1} are indeed fascinating.
We do this, and the rest of the search, using the matroid
capabilities of sage (Version 6.10).
First we want to allow access to certain special functions of the
sage matroids package.

{\small\begin{verbatim}
from sage.matroids.advanced import *
\end{verbatim}}

We will require a test for internal $4$\dash connectivity.

{\small\begin{verbatim}
def IsIFC(M):
    if len(M)<=7:
        return True
    elif len(M)==8 or len(M)==9:
        return all( (M.rank(X)+
             M.rank(M.groundset().symmetric_difference(X))-
             M.rank() > 2) for X in Subsets(M.groundset(),4) )
    elif len(M)==10 or len(M)==11:
        return all( (M.rank(X)+
            M.rank(M.groundset().symmetric_difference(X))-
            M.rank() > 2) for X in Subsets(M.groundset(),4) )
            and all( (M.rank(X)+
            M.rank(M.groundset().symmetric_difference(X))-
            M.rank() > 2) for X in Subsets(M.groundset(),5) )
    elif len(M)==12 or len(M)==13:
        return all( (M.rank(X)+
            M.rank(M.groundset().symmetric_difference(X))-
            M.rank() > 2) for X in Subsets(M.groundset(),4) )
            and all( (M.rank(X)+
            M.rank(M.groundset().symmetric_difference(X))-
            M.rank() > 2) for X in Subsets(M.groundset(),5) )
            and all( (M.rank(X)+
            M.rank(M.groundset().symmetric_difference(X))-
            M.rank() > 2) for X in Subsets(M.groundset(),6) )
    elif len(M)==14 or len(M)==15:
        return all( (M.rank(X)+
            M.rank(M.groundset().symmetric_difference(X))-
            M.rank()>2) for X in Subsets(M.groundset(),4) )
            and all( (M.rank(X)+
            M.rank(M.groundset().symmetric_difference(X))-
            M.rank() > 2) for X in Subsets(M.groundset(),5) )
            and all( (M.rank(X)+
            M.rank(M.groundset().symmetric_difference(X))-
            M.rank() > 2) for X in Subsets(M.groundset(),6) )
            and all( (M.rank(X)+
            M.rank(M.groundset().symmetric_difference(X))-
            M.rank() > 2) for X in Subsets(M.groundset(),7) )
\end{verbatim}}

This command works for $3$\dash connected matroids
with a ground set of size $n$, where $0\leq n\leq 15$.
For each such $n$, the command considers each subset,
$X$, of size between $4$ and $\lfloor n/2 \rfloor$, and checks
that $r(X)+r(E(M)-X)-r(M)>2$.
If this is the case, it returns \verb!True!, and otherwise it
returns \verb!False!.

Next we define a function that will test whether
a pair $(M,N)$ is fascinating.
In the following code, and elsewhere, note that the
command \verb!range(n)! produces the list
$[0,1,2,\ldots,n-1]$, and
\verb!range(m,n)! produces
$[m,m+1,\ldots,n-1]$.

{\small\begin{verbatim}
def Fascinating(M,N):
    rankgap=M.rank()-N.rank()
    sizegap=len(M)-len(N)
    if sizegap>3 and M.has_minor(N):
        Between=False
        for r in range(rankgap+1):
            if Between:
                break
            for F in M.flats(r):
                if Between:
                    break
                if len(F)<sizegap and M.contract(F).has_minor(N):
                    if r==0:
                        Lower=1
                    else:
                        Lower=0
                    DeleteSet=M.groundset().difference(F)
                    for i in range(Lower,sizegap-len(F)):
                        if Between:
                            break
                        for D in Subsets(DeleteSet,i):
                            Test=M.contract(F).delete(D)
                            if Test.has_minor(N):
                                if Test.is_3connected()
                                    and IsIFC(Test):
                                       Between=True
                                       break
        return not Between
    else:
        return False
\end{verbatim}}

First the function tests that $M$ has an $N$\dash minor and
$|E(M)|-|E(N)|>3$.
If this is not the case, it returns \verb!False!.
Otherwise, it considers all flats, $F$, of $M$
such that $0\leq r(F) \leq r(M)-r(N)$.
If $M/F$ has a proper $N$\dash minor,
then it considers subsets, $D$, of
$E(M/F)$.
If $F$ is the rank\dash $0$ flat (which we assume to be
empty), then $D$ is constrained to contain
at least one element.
In any case, $D$ is constrained so that
$|D|<|E(M)|-|E(N)|-|F|$.
Thus $D$ ranges over all subsets such that
$|E(N)|<|E(M/F\ba D)|<|E(M)|$.
If $M/F\ba D$ is \ifc\
and has an $N$\dash minor, then the Boolean
value \verb!Between! is set to be \verb!True!.
At any time, if \verb!Between! is found to be \verb!True!,
then the function breaks out of the loop.
Finally, it returns the negation of \verb!Between!.

Now we can test the pairs that have arisen in the proof up to this
point.

{\small\begin{verbatim}
K4=matroids.CompleteGraphic(4)
K5=matroids.CompleteGraphic(5)
Q3=Matroid(graph=graphs.CubeGraph(3))
F7=matroids.named_matroids.Fano()
Upsilon6=matroids.named_matroids.T12().delete('e')
K33=Matroid(graph=graphs.CompleteBipartiteGraph(3,3))
H1=Matroid(graph=Graph({0:[1,2,4,5],1:[2,3,4,5],2:[3,4],
   3:[4,5],4:[5]}))
H2=Matroid(graph=Graph({0:[1,3,5],1:[2,4,6],2:[3,5,6],
   3:[4,6],4:[5,6]}))
H3=Matroid(graph=Graph({0:[1,3,7],1:[2,6],2:[3,5,7],
   3:[4],4:[5,7],5:[6],6:[7]}))
QML7=Matroid(graph=graphs.CirculantGraph(7,[1,3,4]))
\end{verbatim}}

{\small\begin{verbatim}
print Fascinating(K5,K4)
print Fascinating(Q3,K4)
print Fascinating(Upsilon6,F7)
print Fascinating(Upsilon6.dual(),F7)
print Fascinating(H1,K33)
print Fascinating(H2,K33)
print Fascinating(H3,K33)
print Fascinating(QML7,K33)
    True
    True
    True
    True
    True
    True
    True
    True
\end{verbatim}}

By duality, we may assume that $r(M)\leq r^{*}(M)$.
As $|E(M)|\leq 15$, the next result is a consequence.

\begin{sublemma}
$r(N)\leq r(M)\leq 7$.
\end{sublemma}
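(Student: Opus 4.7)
The plan is to derive both inequalities directly from what has already been set up, with essentially no new ideas needed. The sublemma is a short bookkeeping consequence of the duality normalization stated just before it together with the size bound $|E(M)|\le 15$.

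For the lower inequality $r(N)\leq r(M)$, I would invoke the fact that $N\prec M$, so $N$ arises from $M$ by a sequence of deletions and contractions. Deletions never increase the rank, and each contraction of a non-loop element decreases the rank by exactly one, so the rank of any minor is at most the rank of the original matroid. Hence $r(N)\leq r(M)$. Since we are in the binary, internally $4$-connected setting there are no loops to worry about, but the statement in fact needs no such restriction.

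For the upper inequality $r(M)\leq 7$, I would combine the preceding normalization $r(M)\leq r^{*}(M)$ with the identity $r(M)+r^{*}(M)=|E(M)|$. Together these give $2r(M)\leq |E(M)|$, and by \Cref{sub1} (together with $|E(M)|-|E(N)|>3$), we have $|E(M)|\leq 15$, so $2r(M)\leq 15$ and therefore $r(M)\leq 7$ since $r(M)$ is an integer.

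There is no real obstacle here; the only thing to be careful about is quoting the duality convention $r(M)\leq r^{*}(M)$ from the sentence immediately before the sublemma, and noting that this is a choice that can be made without loss of generality because the class of fascinating pairs is closed under simultaneous dualization of $M$ and $N$, which is exactly the form of the two-pair conclusion of \Cref{maintheorem}.
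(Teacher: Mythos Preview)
Your argument is correct and matches the paper's own (one-line) justification: the duality normalization $r(M)\leq r^{*}(M)$ together with $r(M)+r^{*}(M)=|E(M)|\leq 15$ forces $r(M)\leq 7$, and $r(N)\leq r(M)$ is immediate from $N\prec M$. One small correction: the bound $|E(M)|\leq 15$ is a standing hypothesis of the theorem, not a consequence of \Cref{sub1}, so your parenthetical citation there is misplaced.
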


We create an object that will contain the catalogue of all $3$\dash connected
binary matroids with ground sets of cardinality between $6$ and $15$
and rank at most $7$.
This object is a library, containing 10 lists, each indexed by an integer between
$6$ and $15$.
Each list itself contains eight lists, indexed by integers between $0$ and $7$.
Thus, if $6\leq n\leq 15$, and $0\leq r\leq 7$, then
\verb!Catalogue[n][r]! is the list indexed by $r$, contained in the list indexed by $n$;
that is, it is the list of all $3$\dash connected binary matroids
with rank $r$ and a ground set of size $n$.
We initialise by creating empty lists.

{\small\begin{verbatim}
Catalogue={}
for i in range(6,16):
    Catalogue[i]=[[] for j in range(0,8)]
\end{verbatim}}

Every $3$\dash connected binary matroid with at least
$6$ elements contains an $M(K_{4})$\dash minor
\cite[Corollary~12.2.13]{Oxl11}.
We are going to populate our catalogue by starting
with this matroid, and enlarging the catalogue
through single-element extensions
and coextensions.
When we extend, we ensure we
produce no coloops, no loops, and no parallel pairs.
Dually, when we coextend, we create no
loops, coloops, or series pairs.
Thus we only ever create
$3$\dash connected matroids
\cite[Proposition~8.1.10]{Oxl92}.
Every $3$\dash connected binary matroid can be
constructed in this way, with the exception of
wheels \cite[Theorem~8.8.4]{Oxl11}, so we manually input
the wheels of rank $3$, $4$, $5$, $6$, and $7$.
In this way, we guarantee that our catalogue will
contain every $3$\dash connected binary matroid
with suitable size and rank.

{\small\begin{verbatim}
Wheel3=Matroid(reduced_matrix=Matrix(GF(2),[[1,0,1],[1,1,0],
    [0,1,1]]))
Wheel4=Matroid(reduced_matrix=Matrix(GF(2),[[1,0,0,1],
    [1,1,0,0],[0,1,1,0],[0,0,1,1]]))
Wheel5=Matroid(reduced_matrix=Matrix(GF(2),[[1,0,0,0,1],
    [1,1,0,0,0],[0,1,1,0,0],[0,0,1,1,0],[0,0,0,1,1]]))
Wheel6=Matroid(reduced_matrix=Matrix(GF(2),[[1,0,0,0,0,1],
    [1,1,0,0,0,0],[0,1,1,0,0,0],[0,0,1,1,0,0],[0,0,0,1,1,0],
    [0,0,0,0,1,1]]))
Wheel7=Matroid(reduced_matrix=Matrix(GF(2),[[1,0,0,0,0,0,1],
    [1,1,0,0,0,0,0],[0,1,1,0,0,0,0],[0,0,1,1,0,0,0],
    [0,0,0,1,1,0,0],[0,0,0,0,1,1,0],[0,0,0,0,0,1,1]]))
\end{verbatim}}

Now we seed our catalogue by adding the wheels.

{\small\begin{verbatim}
Catalogue[6][3].append(Wheel3)
Catalogue[8][4].append(Wheel4)
Catalogue[10][5].append(Wheel5)
Catalogue[12][6].append(Wheel6)
Catalogue[14][7].append(Wheel7)
\end{verbatim}}

At any time, we can save our current catalogue
with a command of the following type.

{\small\begin{verbatim}
save (Catalogue, "/SampleFolder/catalogue.sobj")
\end{verbatim}}

We are then able to recover our saved work with the
following command.

{\small\begin{verbatim}
Catalogue = load("/SampleFolder/catalogue.sobj")
\end{verbatim}}

Now we define the command \verb!Populate!,
which we will use to fill in the entries in our
catalogue.
On input $n$, the command fills in all
entries of the catalogue corresponding to
matroids with ground sets of size $n$.
It does this by letting the rank, $r$, range from $0$ to $7$,
and considering all matroids of rank $r$
in the catalogue of matroids with ground sets of size $n-1$.
For each such matroid, $N$, it generates
the list of non-isomorphic simple single-element
extensions of $N$, using the built-in command
\verb!get_nonisomorphic_matroids!.
It then adds these extensions to the catalogue of
rank\dash $r$, size\dash $n$ matroids, as long as
they are not isomorphic to matroids already appearing there.
If $r$ is greater than zero, it then performs the same actions
using cosimple single-element coextensions.
Finally, it prints the number of matroids it
has generated.

{\small\begin{verbatim}
def Populate(n):
    for r in range(8):
        for N in Catalogue[n-1][r]:
            List=N.linear_extensions(simple=True,element=len(N))
            List=get_nonisomorphic_matroids(List)
            for M in List:
                if not any(L.is_isomorphic(M) for
                    L in Catalogue[n][r]):
                    Catalogue[n][r].append(M)
        if r>0:
            for N in Catalogue[n-1][r-1]:
                List=N.linear_coextensions(cosimple=True,
                        element=len(N))
                List=get_nonisomorphic_matroids(List)
                for M in List:
                    if not any(L.is_isomorphic(M) for
                        L in Catalogue[n][r]):
                        Catalogue[n][r].append(M)
    print [len(Catalogue[n][r]) for r in range(8)]
\end{verbatim}}

Generating the catalogue of $3$\dash connected binary matroids up to
size $13$ takes only a few minutes.

{\small\begin{verbatim}
%time
Populate(7)
    [0, 0, 0, 1, 1, 0, 0, 0]
    CPU time: 0.00 s,  Wall time: 0.00 s
\end{verbatim}}

{\small\begin{verbatim}
%time
Populate(8)
    [0, 0, 0, 0, 3, 0, 0, 0]
    CPU time: 0.01 s,  Wall time: 0.01 s
\end{verbatim}}

{\small\begin{verbatim}
%time
Populate(9)
    [0, 0, 0, 0, 4, 4, 0, 0]
    CPU time: 0.03 s,  Wall time: 0.03 s
\end{verbatim}}

{\small\begin{verbatim}
%time
Populate(10)
    [0, 0, 0, 0, 4, 16, 4, 0]
    CPU time: 0.20 s,  Wall time: 0.20 s
\end{verbatim}}

{\small\begin{verbatim}
%time
Populate(11)
    [0, 0, 0, 0, 3, 37, 37, 3]
    CPU time: 1.97 s,  Wall time: 1.97 s
\end{verbatim}}

{\small\begin{verbatim}
%time
Populate(12)
    [0, 0, 0, 0, 2, 68, 230, 68]
    CPU time: 18.58 s,  Wall time: 18.58 s
\end{verbatim}}

{\small\begin{verbatim}
%time
Populate(13)
    [0, 0, 0, 0, 1, 98, 983, 983]
    CPU time: 218.23 s,  Wall time: 218.26 s
\end{verbatim}}

Generating matroids with $14$ and $15$ elements takes considerably
more time.

{\small\begin{verbatim}
%time
Populate(14)
    [0, 0, 0, 0, 1, 121, 3360, 10035]
    CPU time: 3243.90 s,  Wall time: 3243.96 s
\end{verbatim}}

{\small\begin{verbatim}
%time
Populate(15)
    [0, 0, 0, 0, 1, 140, 10012, 81218]
    CPU time: 83469.47 s,  Wall time: 83468.83 s
\end{verbatim}}

Now we work through our catalogue of all $3$\dash connected
binary matroids, and pick out those that are
\ifc.
As before, we create an object that will be our
catalogue of \ifc\ matroids.

{\small\begin{verbatim}
IFCCatalogue={}
for i in range(6,16):
    IFCCatalogue[i]=[[] for j in range(0,8)]
\end{verbatim}}

We construct a command that will populate our
catalogue with \ifc\ matroids.

{\small\begin{verbatim}
def PopulateIFC(n):
    for r in range(8):
        for M in Catalogue[n][r]:
            if IsIFC(M):
                IFCCatalogue[n][r].append(M)
    print [len(IFCCatalogue[n][r]) for r in range(8)]
\end{verbatim}}

These are the results when we apply the command to
fill our catalogue.

{\small\begin{verbatim}
%time
PopulateIFC(6)
    [0, 0, 0, 1, 0, 0, 0, 0]
    CPU time: 0.00 s,  Wall time: 0.00 s
\end{verbatim}}

{\small\begin{verbatim}
%time
PopulateIFC(7)
    [0, 0, 0, 1, 1, 0, 0, 0]
    CPU time: 0.00 s,  Wall time: 0.00 s
\end{verbatim}}

{\small\begin{verbatim}
%time
PopulateIFC(8)
    [0, 0, 0, 0, 0, 0, 0, 0]
    CPU time: 0.00 s,  Wall time: 0.00 s
\end{verbatim}}

{\small\begin{verbatim}
%time
PopulateIFC(9)
    [0, 0, 0, 0, 1, 1, 0, 0]
    CPU time: 0.02 s,  Wall time: 0.02 s
\end{verbatim}}

Since $F_{7}$ and $M(K_{3,3})$ are \ifc,
from these commands it follows that, as stated earlier, the only
\ifc\ binary matroids
with seven, eight, or nine elements are $F_{7}$, $M(K_{3,3})$,
and their duals.

{\small\begin{verbatim}
%time
PopulateIFC(10)
    [0, 0, 0, 0, 2, 2, 2, 0]
    CPU time: 0.14 s,  Wall time: 0.14 s
\end{verbatim}}

{\small\begin{verbatim}
%time
PopulateIFC(11)
    [0, 0, 0, 0, 2, 7, 7, 2]
    CPU time: 0.76 s,  Wall time: 0.78 s
\end{verbatim}}

{\small\begin{verbatim}
%time
PopulateIFC(12)
    [0, 0, 0, 0, 2, 24, 46, 24]
    CPU time: 9.60 s,  Wall time: 9.60 s
\end{verbatim}}

{\small\begin{verbatim}
%time
PopulateIFC(13)
    [0, 0, 0, 0, 1, 52, 272, 272]
    CPU time: 94.84 s,  Wall time: 94.87 s
\end{verbatim}}

{\small\begin{verbatim}
%time
PopulateIFC(14)
    [0, 0, 0, 0, 1, 84, 1389, 3385]
    CPU time: 1829.54 s,  Wall time: 1829.78 s
\end{verbatim}}

{\small\begin{verbatim}
%time
PopulateIFC(15)
    [0, 0, 0, 0, 1, 116, 5816, 36962]
    CPU time: 26298.59 s,  Wall time: 26297.77 s
\end{verbatim}}

These computations show that there are
exactly $24$ \ifc\ binary matroids
with ground sets of cardinality $10$ or $11$.
We put these matroids into a list of possible ``target'' matroids:

{\small\begin{verbatim}
Targets=[]
Targets.extend(IFCCatalogue[10][4])
Targets.extend(IFCCatalogue[10][5])
Targets.extend(IFCCatalogue[10][6])
Targets.extend(IFCCatalogue[11][4])
Targets.extend(IFCCatalogue[11][5])
Targets.extend(IFCCatalogue[11][6])
Targets.extend(IFCCatalogue[11][7])
\end{verbatim}}

We wish to process each of the \ifc\
matroids in our catalogue with a ground set of cardinality
$11$, $12$, $13$, or $14$, and record which of the
$24$ ``target'' matroids it has as a proper minor.
We start by creating a new list of lists of lists.

{\small\begin{verbatim}
TargetMinors={}
for n in range(11,15):
    TargetMinors[n]={}
    for r in range(8):
        TargetMinors[n][r]={}
        for i in range(len(IFCCatalogue[n][r])):
            TargetMinors[n][r][i]=[0 for j in range(24)]
\end{verbatim}}

Assume that $n$ is in $\{11,12,13,14\}$, $r$ is in $\{0,\ldots,7\}$,
and $i$ indexes an \ifc\ matroid
in the list \verb!IFCCatalogue[n][r]!.
The object \verb!TargetMinors[n][r][i]! is a list with
$24$ entries, each equal to $0$ or $1$.
The entry \verb!TargetMinors[n][r][i][j]! will be equal to $1$
if and only if matroid number $i$ in the list
\verb!IFCCatalogue[n][r]! contains a proper minor
isomorphic to matroid number $j$ in the list
\verb!Targets!.

{\small\begin{verbatim}
%time
for n in range(11,15):
    for r in range(8):
        for i in range(len(IFCCatalogue[n][r])):
            M=IFCCatalogue[n][r][i]
            for j in range(24):
                N=Targets[j]
                if len(N)<len(M) and M.has_minor(N):
                    TargetMinors[n][r][i][j]=1
    CPU time: 1797.00 s,  Wall time: 1797.06 s
\end{verbatim}}

The heart of the computer search is contained in the
following code:

{\small\begin{verbatim}
%time
for r in range(8):
    for i in range(len(IFCCatalogue[15][r])):
        M=IFCCatalogue[15][r][i]
        Possibles=set(range(24))
        for k in range(24):
            if not M.has_minor(Targets[k]):
                Possibles.discard(k)
        for size in range(14,10,-1):
            if len(Possibles)==0:
                break
            for rank in range(r,max(r-(15-size),0)-1,-1):
                if len(Possibles)==0:
                    break
                for j in range(len(IFCCatalogue[size][rank])):
                    Test=False
                    for k in Possibles:
                        if TargetMinors[size][rank][j][k]==1:
                            Test=True
                            break
                    if Test:
                        if M.has_minor(IFCCatalogue[size][rank][j]):
                            for k in range(24):
                                if TargetMinors[size][rank][j][k]==1:
                                    Possibles.discard(k)
                            if len(Possibles)==0:
                                break
        for k in Possibles:
            print (15,r,i,k)
\end{verbatim}}

This code lets the variable $r$ range between $0$ and $7$.
For each $r$, the variable $i$ indexes a matroid,
$M$, contained in the list \verb!IFCCatalogue[15][r]!, and $M$
ranges over all matroids in this list.
We are trying to determine if there is an
fascinating pair, $(M,N)$, that acts as a counterexample
to the theorem.
Recall that \Cref{sub1} implies that $|E(N)|$ is $10$ or $11$,
and \verb!Targets! contains a list of the $24$ matroids
with $10$ or $11$ elements that could possibly be $N$.
When processing $M$, we set \verb!Possibles! to
initially be $\{0,1,\ldots, 23\}$.
This set will record the indices of matroids in
\verb!Target! that can possibly be $N$.
If $M$ does not contain a minor isomorphic
to \verb!Targets[k]!, then we discard $k$ from
\verb!Possibles!.
Indeed, throughout the process, we seek to
discard indices from \verb!Possibles!.
If at any time, there are no indices left
in \verb!Possibles!, then we know that $M$ cannot
be in the fascinating pair we seek, so we move to the next
matroid.
Assuming that \verb!Possibles! is not empty, we
let \texttt{size} range between $14$ and $11$ (starting at
$14$), and we let
\verb!rank! range between $r$ and $r-(15-\texttt{size})$.
We seek \ifc\ matroids with
the parameters \texttt{size} and \verb!rank! that are proper
minors of $M$.
(This is why we do not allow \verb!rank! to be less than
$r-(15-\texttt{size})$, since no minor of $M$ with ground set
of cardinality \verb!size! can have rank lower than this.)
Now we let $j$ index a matroid in the list
\verb!IFCCatalogue[size][rank]!.
Let the indexed matroid be $M'$.
If $M'$ has no minor isomorphic to \verb!Targets[k]!, for any
$k$ in \verb!Possibles!, then we will not be able to use
$M'$ to discard any more indices in \verb!Possibles!.
So if this is the case, we do not consider $M'$ any further.
On the other hand, as soon as we find a $k$
in \verb!Possibles! such that $M'$ has a minor
isomorphic to \verb!Targets[k]!, we move to the next stage of
the process.
In this stage, we first of all test that $M$ has an
$M'$\dash minor.
In the case that it does,
we let $k$ range through all of the indices
in $\{0,1,\ldots, 23\}$.
If \verb!TargetMinors[size][rank][j][k]! is $1$,
indicating that $M'$ has a minor isomorphic to
\verb!Targets[k]!, then we can discard $k$
from \verb!Possibles!, since the matroid
$M'$ means that no fascinating pair can contain $M$ and
\verb!Targets[k]!.
We continue this process until either
\verb!Possibles! is empty, or until we have examined
every possible matroid $M'$.
If \verb!Possibles! is not empty at this stage, we
know that, for every $k$ in \verb!Possibles!,
$(M,\texttt{Targets[k]})$ is a fascinating pair, so
we print out the information \verb!(15,r,i,k)!,
enabling us to identify the matroids in that pair.

Evaluating this command gives the following output:

{\small\begin{verbatim}
(15, 6, 445, 5)
(15, 6, 589, 19)
(15, 6, 5414, 16)
(15, 7, 0, 4)
(15, 7, 0, 8)
(15, 7, 34137, 22)
(15, 7, 34466, 22)
(15, 7, 34693, 22)
(15, 7, 34762, 22)
(15, 7, 34769, 22)
(15, 7, 35415, 23)
(15, 7, 35441, 23)
(15, 7, 35445, 23)
(15, 7, 35455, 23)
CPU time: 77608.72 s,  Wall time: 77606.78 s
\end{verbatim}}

We now seek to find the fascinating pairs, $(M,N)$,
where $|E(M)|=14$.
This implies that $|E(N)|=10$.
Since the only matroids in the list
\verb!Targets! with ground sets of cardinality $10$
are those with indices in $\{0,1,\ldots,5\}$, we
change the code so that \verb!Possibles! is
initially set to $\{0,1,\ldots,5\}$.
We also let \texttt{size} range between $13$ and $11$,
instead of $14$ and $11$.

{\small\begin{verbatim}
%time
for r in range(8):
    for i in range(len(IFCCatalogue[14][r])):
        M=IFCCatalogue[14][r][i]
        Possibles=set(range(6))
        for k in range(6):
            if not M.has_minor(Targets[k]):
                Possibles.discard(k)
        for size in range(13,10,-1):
            if len(Possibles)==0:
                break
            for rank in range(r,max(r-(14-size),0)-1,-1):
                if len(Possibles)==0:
                    break
                for j in range(len(IFCCatalogue[size][rank])):
                    Test=False
                    for k in Possibles:
                        if TargetMinors[size][rank][j][k]==1:
                            Test=True
                            break
                    if Test:
                        if M.has_minor(IFCCatalogue[size][rank][j]):
                            for k in range(6):
                                if TargetMinors[size][rank][j][k]==1:
                                    Possibles.discard(k)
                            if len(Possibles)==0:
                                break
        for k in Possibles:
            print (14,r,i,k)
\end{verbatim}}

This code produces the following output.

{\small\begin{verbatim}
(14, 6, 10, 4)
(14, 6, 14, 4)
(14, 6, 27, 5)
(14, 6, 1079, 4)
(14, 6, 1155, 4)
(14, 6, 1169, 4)
(14, 6, 1387, 1)
(14, 7, 3148, 4)
(14, 7, 3291, 0)
(14, 7, 3378, 5)
(14, 7, 3381, 1)
CPU time: 1073.44 s,  Wall time: 1072.64 s
\end{verbatim}}

This procedure has discovered $11$ pairs, but
the following commands show that, up to duality, there actually
only $9$ pairs.

{\small\begin{verbatim}
print IFCCatalogue[14][7][3148].
                is_isomorphic(IFCCatalogue[14][7][3291].dual())
print Targets[4].is_isomorphic(Targets[0].dual())
    True
    True
\end{verbatim}}

{\small\begin{verbatim}
print IFCCatalogue[14][7][3378].
                is_isomorphic(IFCCatalogue[14][7][3381].dual())
print Targets[5].is_isomorphic(Targets[1].dual())
    True
    True
\end{verbatim}}

Therefore, amongst fascinating pairs, $(M,N)$, with
$|E(M)|\leq 15$, there are, up to duality,
two containing $M(K_{4})$,
two containing $F_{7}$, and
four containing $M(K_{3,3})$.
The computer search has found an additional $23$ pairs.
All we need now do is confirm that these $23$ pairs are
as described in \Cref{sect2}, and that hence there are
no counterexamples to \Cref{maintheorem}.

The matroids \verb!Targets[1]! and \verb!Targets[5]! are
isomorphic to $M(K_{5})$ and $M^{*}(K_{5})$ respectively.

{\small\begin{verbatim}
print Targets[1].is_isomorphic(K5)
print Targets[5].is_isomorphic(K5.dual())
    True
    True
\end{verbatim}}

The tuples $(14, 6, 1387, 1)$,
$(14, 7, 3381, 1)$,
$(14, 6, 27, 5)$, and
$(15, 6, 445, 5)$, correspond to the pairs
$(M(\qml{7}),M(K_{5}))$,
$(M(Q_{3}^{\times}),M(K_{5}))$,
$(M^{*}(Y_{9}),M^{*}(K_{5}))$,
and $(M^{*}(\cml{10}),M^{*}(K_{5}))$.

{\small\begin{verbatim}
Q8cross=Matroid(graph=Graph({0:[1,5,7],1:[2,4],2:[3,6,7],
                                   3:[4,6,7],4:[5],5:[6],6:[7]}))
Y9=Matroid(graph=Graph({0:[1,3,5,7],1:[2,8],2:[3,6],
                                   3:[4],4:[5,8],5:[6],6:[7],7:[8]}))
CML10=Matroid(graph=graphs.CirculantGraph(10,[1,5]))

print IFCCatalogue[14][6][1387].is_isomorphic(QML7)
print IFCCatalogue[14][7][3381].is_isomorphic(Q8cross)
print IFCCatalogue[14][6][27].is_isomorphic(Y9.dual())
print IFCCatalogue[15][6][445].is_isomorphic(CML10.dual())
    True
    True
    True
    True
\end{verbatim}}

\verb!Targets[0]! and \verb!Targets[4]! are isomorphic to
$\Delta_{4}$ and $\Delta_{4}^{*}$ respectively.

{\small\begin{verbatim}
A=Matrix(GF(2),[
[1,0,0,1,1,1],
[1,1,0,0,1,0],
[0,1,1,0,1,1],
[0,0,1,1,1,0]])
Delta4=Matroid(reduced_matrix=A)
print Targets[0].is_isomorphic(Delta4)
print Targets[4].is_isomorphic(Delta4.dual())
    True
    True
\end{verbatim}}

The tuples
$(14, 6, 10, 4)$,
$(14, 6, 1079, 4)$,
$(14, 6, 14, 4)$,
$(14, 6, 1155, 4)$, and
$(14, 6, 1169, 4)$ correspond to the
fascinating pairs
$(A_{1}^{*},\Delta_{4}^{*})$,
$(A_{2}^{*},\Delta_{4}^{*})$,
$(A_{3}^{*},\Delta_{4}^{*})$,
$(A_{4}^{*},\Delta_{4}^{*})$, and
$(A_{5}^{*},\Delta_{4}^{*})$.

{\small\begin{verbatim}
print IFCCatalogue[14][6][10].is_isomorphic(
    Matroid(reduced_matrix=A.stack(Matrix(GF(2),[
    [1,0,1,0,0,1],
    [1,0,0,0,1,0],
    [0,1,1,0,1,0],
    [0,0,0,1,1,0]]))).dual())
print IFCCatalogue[14][6][1079].is_isomorphic(
    Matroid(reduced_matrix=A.stack(Matrix(GF(2),[
    [1,0,1,0,0,1],
    [1,0,0,0,1,0],
    [0,1,0,1,0,0],
    [0,1,0,0,1,1]]))).dual())
print IFCCatalogue[14][6][14].is_isomorphic(
    Matroid(reduced_matrix=A.stack(Matrix(GF(2),[
    [1,0,1,0,0,0],
    [1,0,0,0,1,0],
    [0,1,0,1,0,1],
    [0,0,0,1,1,0]]))).dual())
print IFCCatalogue[14][6][1155].is_isomorphic(
    Matroid(reduced_matrix=A.stack(Matrix(GF(2),[
    [1,0,1,0,0,0],
    [1,0,0,0,1,0],
    [0,1,0,1,0,0],
    [0,0,0,1,1,0]]))).dual())
print IFCCatalogue[14][6][1169].is_isomorphic(
    Matroid(reduced_matrix=A.stack(Matrix(GF(2),[
    [1,0,0,0,1,0],
    [0,1,1,0,1,0],
    [0,1,0,1,0,1],
    [0,0,1,1,0,0]]))).dual())
    True
    True
    True
    True
    True
\end{verbatim}}

The tuples $(14,7,3291,0)$ and $(15,7,0,4)$
correspond to $(A_{6},\Delta_{4})$ and
$(\Upsilon_{8}^{*},\Delta_{4}^{*})$.

{\small\begin{verbatim}
print IFCCatalogue[14][7][3291].is_isomorphic(
    Matroid(reduced_matrix=A.stack(Matrix(GF(2),[
    [1,1,0,0,1,0],
    [1,0,0,0,1,0],
    [1,0,0,0,0,0]])).augment(vector(GF(2),[1,1,0,0,0,0,1]))))
    True
\end{verbatim}}

{\small\begin{verbatim}
Upsilon8=Matroid(reduced_matrix=Matrix(GF(2),[
   [1,0,0,0,0,0,1,1],
   [1,1,0,0,0,0,0,1],
   [0,1,1,0,0,0,0,1],
   [0,0,1,1,0,0,0,1],
   [0,0,0,1,1,0,0,1],
   [0,0,0,0,1,1,0,1],
   [0,0,0,0,0,1,1,1]])).dual()
IFCCatalogue[15][7][0].is_isomorphic(Upsilon8.dual())
   True
\end{verbatim}}

\verb!Targets[22]! is isomorphic to $P^{*}$.

{\small\begin{verbatim}
B=Matrix(GF(2),[
[1,1,1,0,0,0,1],
[1,0,0,1,1,0,1],
[0,1,0,1,0,1,1],
[0,0,1,0,1,1,1]])
P=Matroid(reduced_matrix=B)
Targets[22].is_isomorphic(P.dual())
   True
\end{verbatim}}

The tuples
$(15, 7, 34466, 22)$,
$(15, 7, 34762, 22)$,
$(15, 7, 34137, 22)$,
$(15, 7, 34693, 22)$, and
$(15, 7, 34769, 22)$ correspond to the fascinating pairs
$(B_{1}^{*},P^{*})$,
$(B_{2}^{*},P^{*})$,
$(B_{3}^{*},P^{*})$,
$(B_{4}^{*},P^{*})$, and
$(B_{5}^{*},P^{*})$.

{\small\begin{verbatim}
print IFCCatalogue[15][7][34466].is_isomorphic(
    Matroid(reduced_matrix=B.stack(Matrix(GF(2),[
    [1,1,0,0,0,0,1],
    [1,0,1,1,0,1,0],
    [0,1,0,0,0,1,0],
    [0,0,0,1,1,0,0]]))).dual())
print IFCCatalogue[15][7][34762].is_isomorphic(
    Matroid(reduced_matrix=B.stack(Matrix(GF(2),[
    [1,0,1,1,0,1,0],
    [1,0,0,1,0,0,1],
    [0,1,0,0,0,0,1],
    [0,0,0,0,1,1,1]]))).dual())
print IFCCatalogue[15][7][34137].is_isomorphic(
    Matroid(reduced_matrix=B.stack(Matrix(GF(2),[
    [1,0,1,1,0,1,0],
    [1,0,0,0,0,0,1],
    [0,1,0,0,1,0,0],
    [0,0,1,0,1,0,1]]))).dual())
print IFCCatalogue[15][7][34693].is_isomorphic(
    Matroid(reduced_matrix=B.stack(Matrix(GF(2),[
    [1,0,1,1,0,1,0],
    [0,1,1,0,0,0,1],
    [0,1,0,1,0,0,0],
    [0,0,0,0,1,0,1]]))).dual())
print IFCCatalogue[15][7][34769].is_isomorphic(
    Matroid(reduced_matrix=B.stack(Matrix(GF(2),[
    [1,0,1,0,0,0,1],
    [1,0,0,1,0,0,1],
    [0,1,0,0,0,1,1],
    [0,0,0,0,1,1,1]]))).dual())
    True
    True
    True
    True
    True
\end{verbatim}}

\verb!Targets[23]! is isomorphic to $Q^{*}$.

{\small\begin{verbatim}
C=Matrix(GF(2),[
[1,0,0,1,1,1,0],
[1,1,0,0,1,0,1],
[0,1,1,0,1,1,1],
[0,0,1,1,1,0,1]])
Q=Matroid(reduced_matrix=C)
Targets[23].is_isomorphic(Q.dual())
    True
\end{verbatim}}

The tuples
$(15, 7, 35445, 23)$,
$(15, 7, 35441, 23)$,
$(15, 7, 35455, 23)$, and
$(15, 7, 35415, 23)$ correspond to the
fascinating pairs
$(C_{1}^{*},Q^{*})$,
$(C_{2}^{*},Q^{*})$,
$(C_{3}^{*},Q^{*})$, and
$(C_{4}^{*},Q^{*})$.

{\small\begin{verbatim}
print IFCCatalogue[15][7][35455].is_isomorphic(
    Matroid(reduced_matrix=C.stack(Matrix(GF(2),[
    [1,1,1,1,0,0,1],
    [1,1,1,0,1,0,1],
    [0,0,0,1,0,1,0],
    [0,0,0,0,1,0,1]]))).dual())
print IFCCatalogue[15][7][35441].is_isomorphic(
    Matroid(reduced_matrix=C.stack(Matrix(GF(2),[
    [1,1,1,1,0,0,1],
    [1,0,0,0,0,1,0],
    [0,1,0,0,1,0,0],
    [0,0,1,0,1,0,1]]))).dual())
print IFCCatalogue[15][7][35445].is_isomorphic(
    Matroid(reduced_matrix=C.stack(Matrix(GF(2),[
    [1,1,0,0,1,0,0],
    [1,0,0,0,0,1,0],
    [0,1,1,1,1,0,1],
    [0,0,1,0,1,0,0]]))).dual())
print IFCCatalogue[15][7][35415].is_isomorphic(
    Matroid(reduced_matrix=C.stack(Matrix(GF(2),[
    [1,0,0,0,1,1,0],
    [0,1,0,0,1,0,1],
    [0,0,1,0,1,1,1],
    [0,0,0,1,1,0,1]]))).dual())
    True
    True
    True
    True
\end{verbatim}}

The tuple $(15, 6, 589, 19)$ corresponds to
$(R^{*},D_{1}^{*})$.

{\small\begin{verbatim}
D=Matrix(GF(2),[
[1,1,1,0,0,0],
[1,0,0,1,1,0],
[0,1,0,1,0,1],
[0,0,1,0,1,1],
[1,0,0,0,0,1],
])
R=Matroid(reduced_matrix=D)
print Targets[19].is_isomorphic(R.dual())
print IFCCatalogue[15][6][589].is_isomorphic(
    Matroid(reduced_matrix=D.stack(Matrix(GF(2),[
    [1,1,0,0,0,0],
    [0,0,1,0,0,1],
    [0,0,0,1,0,1],
    [1,0,0,0,1,0]]))).dual())
    True
    True
\end{verbatim}}

The tuple $(15,6,5414,16)$ corresponds to
$(E_{1}^{*},S^{*})$.

{\small\begin{verbatim}
E=Matrix(GF(2),[
[1,0,0,1,1,1],
[1,1,0,0,1,0],
[0,1,1,0,1,1],
[0,0,1,1,1,0],
[1,0,0,1,0,1],
])
S=Matroid(reduced_matrix=E)
print Targets[16].is_isomorphic(S.dual())
print IFCCatalogue[15][6][5414].is_isomorphic(
    Matroid(reduced_matrix=F.stack(Matrix(GF(2),[
    [1,0,0,0,1,0],
    [1,0,0,0,0,1],
    [0,1,0,1,0,1],
    [0,0,1,1,0,0]]))).dual())
    True
    True
\end{verbatim}}

Finally, the tuple $(15,7,0,8)$ gives us the pair
$(\Upsilon_{8}^{*},\Upsilon_{6}^{*})$.

{\small\begin{verbatim}
print IFCCatalogue[15][7][0].is_isomorphic(Upsilon8.dual())
print Targets[8].is_isomorphic(Upsilon6.dual())
    True
    True
\end{verbatim}}

Now we can conclude there are no counterexamples, so
\Cref{maintheorem} holds.

We have characterised all fascinating pairs
satisfying $|E(M)|\leq 15$.
From this characterisation, it is straightforward to find all
interesting pairs satisfying the same constraint.
Certainly every fascinating pair is an interesting pair.
If $(M,M_{0})$ is interesting but not fascinating, then
there is an \ifc\ matroid, $M_{1}$,
satisfying $M_{0}\prec M_{1}\prec M$.
Now $(M,M_{1})$ is an interesting pair, so we can repeat this
argument and deduce that either $(M,M_{1})$ is fascinating,
or there is an \ifc\ matroid, $M_{2}$,
satisfying $M_{1}\prec M_{2}\prec M$.
Continuing in this way, we see that if $(M,M_{0})$ is interesting
but not fascinating, then $M_{0}\prec N\prec M$ for some \ifc\
matroid, $N$, such that $(M,N)$ is a fascinating pair.

This observation gives us our strategy for finding all interesting pairs.
Let $(M,N)$ range over all (up to duality) fascinating pairs with
$|E(M)|\leq 15$.
Consider each matroid, $T$, from the catalogue of \ifc\
matroids, that could potentially be a proper minor of $N$.
If $N$ has a proper $T$\dash minor, then
test to see whether any proper minor of $M$ produced by
deleting and contracting at most three elements
is \ifc\ with a $T$\dash minor.
If not, then $(M,T)$ is an interesting pair.
The following code performs exactly such a check.

{\small\begin{verbatim}
def GenerateInteresting(M,N):
    for n in range(6,len(N)):
        for r in range(N.rank()-(len(N)-n),N.rank()+1):
            for i in range(len(IFCCatalogue[n][r])):
                T=IFCCatalogue[n][r][i]
                if N.has_minor(T):
                    Between=False
                    for p in range(min(M.rank()-T.rank(),3)+1):
                        if Between:
                            break
                        for F in M.flats(p):
                            if Between:
                                break
                            if len(F)<4 and
                                        M.contract(F).has_minor(T):
                                if p==0:
                                    Lower=1
                                else:
                                    Lower=0
                                DeleteSet=
                                        M.groundset().difference(F)
                                for q in range(Lower,4-len(F)):
                                    if Between:
                                        break
                                    for D in Subsets(DeleteSet,q):
                                        Test=M.contract(F).delete(D)
                                        if Test.has_minor(T):
                                            if Test.is_3connected()
                                                    and IsIFC(Test):
                                                Between=True
                                                break
                    if not Between:
                        print (n,r,i)
\end{verbatim}}

Now we apply this function to all of the interesting pairs we
have discovered.

{\small\begin{verbatim}
GenerateInteresting(K5,K4)
GenerateInteresting(Q3,K4)
GenerateInteresting(Upsilon7,F7)
GenerateInteresting(Upsilon6.dual(),F7)
GenerateInteresting(H1,K33)
GenerateInteresting(H2,K33)
GenerateInteresting(H3,K33)
GenerateInteresting(QML7,K33)
    (6, 3, 0)
\end{verbatim}}

From this we see that $(M(\qml{7}),M(K_{4}))$ is an
interesting pair.
Now we check all the fascinating pairs discovered
by in the search.

{\small\begin{verbatim}
GenerateInteresting(IFCCatalogue[15][6][445],Targets[5])
GenerateInteresting(IFCCatalogue[15][6][589],Targets[19])
GenerateInteresting(IFCCatalogue[15][6][5414],Targets[16])
GenerateInteresting(IFCCatalogue[15][7][0],Targets[4])
    (7, 3, 0)
    (7, 4, 0)
\end{verbatim}}

This shows that $(\Upsilon_{8}^{*},F_{7})$ and $(\Upsilon_{8}^{*},F_{7}^{*})$
are interesting pairs.

{\small\begin{verbatim}
GenerateInteresting(IFCCatalogue[15][7][0],Targets[8])
    (7, 3, 0)
    (7, 4, 0)
\end{verbatim}}

This again tells us that
$(\Upsilon_{8}^{*},F_{7})$ and $(\Upsilon_{8}^{*},F_{7}^{*})$
are interesting pairs.
(This time we have discovered $F_{7}$ and $F_{7}^{*}$ by
searching for minors of $\Upsilon_{6}^{*}$, instead of $\Delta_{4}^{*}$.)

{\small\begin{verbatim}
GenerateInteresting(IFCCatalogue[15][7][34137],Targets[22])
GenerateInteresting(IFCCatalogue[15][7][34466],Targets[22])
GenerateInteresting(IFCCatalogue[15][7][34693],Targets[22])
GenerateInteresting(IFCCatalogue[15][7][34762],Targets[22])
GenerateInteresting(IFCCatalogue[15][7][34769],Targets[22])
GenerateInteresting(IFCCatalogue[15][7][35415],Targets[23])
GenerateInteresting(IFCCatalogue[15][7][35441],Targets[23])
GenerateInteresting(IFCCatalogue[15][7][35445],Targets[23])
GenerateInteresting(IFCCatalogue[15][7][35455],Targets[23])
GenerateInteresting(IFCCatalogue[14][6][10],Targets[4])
GenerateInteresting(IFCCatalogue[14][6][14],Targets[4])
GenerateInteresting(IFCCatalogue[14][6][27],Targets[5])
GenerateInteresting(IFCCatalogue[14][6][1079],Targets[4])
GenerateInteresting(IFCCatalogue[14][6][1155],Targets[4])
GenerateInteresting(IFCCatalogue[14][6][1169],Targets[4])
GenerateInteresting(IFCCatalogue[14][6][1387],Targets[1])
    (6, 3, 0)
\end{verbatim}}

As \verb!IFCCatalogue[14][6][1387])! is $M(\qml{7})$ we have again
discovered that $(M(\qml{7}),M(K_{4}))$ is an interesting pair.
We finish the search with the following commands.

{\small\begin{verbatim}
GenerateInteresting(IFCCatalogue[14][7][3148],Targets[4])
GenerateInteresting(IFCCatalogue[14][7][3291],Targets[0])
GenerateInteresting(IFCCatalogue[14][7][3378],Targets[5])
GenerateInteresting(IFCCatalogue[14][7][3381],Targets[1])
\end{verbatim}}

Thus the only interesting pairs that fail to be fascinating are
(up to duality) $(M(\qml{7}),M(K_{4}))$, $(\Upsilon_{8},F_{7})$, and
$(\Upsilon_{8}^{*},F_{7})$, and \Cref{mtheorem2} is now proved.


\end{document}